\theoremstyle{plain}
\newtheorem{theorem}{Theorem}
\newtheorem{corollary}{Corollary}
\theoremstyle{definition}
\newtheorem{definition}[theorem]{Definition}
\theoremstyle{Remark}
\newtheorem{remark}[theorem]{Remark}
\begin{document}
	
\title[Infinite matrix associated to sequences]{Infinite matrix associated to sequences}

\author*[1]{\fnm{Mamy Laingo Nomenjanahary} \sur{Rakotoarison}}\email{rmlnomenjanahary@gmail.com}

\author[2]{\fnm{Dimbiniaina} \sur{Ratovilebamboavison}}\email{ratovilebamboavison@gmail.com}
\equalcont{These authors contributed equally to this work.}

\author[2]{\fnm{Fanja} \sur{Rakotondrajao}}\email{frakoton@yahoo.fr}
\equalcont{These authors contributed equally to this work.}

\affil[1]{\orgdiv{D\'epartement de Math\'ematiques et informatiques}, \orgname{Universit\'e d'Antananarivo}, \orgaddress{\postcode{101}, \country{Madagascar}}}

\affil[2]{\orgdiv{ISTRALMA Ambatondrazaka}, \orgname{Universit\'e de Toamasina}, \orgaddress{ \city{Toamasina}, \postcode{503}, \country{Madagascar}}}

\affil[2]{\orgdiv{D\'epartement de Math\'ematiques et informatiques}, \orgname{Universit\'e d'Antananarivo}, \orgaddress{\postcode{101}, \country{Madagascar}}}

\abstract{In this paper, we study the Euler-Seidel matrices with coefficients and determine the associated Riordan matrix to a given matrix, if it does exist. Computation of the generating function of the final sequence is established by the associated Riordan matrix. Applications are given.}

\keywords{Euler-Seidel matrix, final sequence, generating functions, initial sequence, Riordan matrix.\\
\newline
\textbf{2010 Mathematics subject classification: }05A15; 11B83; 05A19; 15B36; 47B37.}

\maketitle

\section{Recalls and introduction} \label{sec1}

Dumont \cite{bib1} reintroduced the Euler-Seidel matrix consisting of constructing an infinite matrix, starting from a given sequence $(a_n)$ by setting $a_n^0=a_n$ and defining the sequence $\Bigl(a_n^k\Bigr)_{n,k\geq 0}$ by
\begin{displaymath} 
	a_n^k=a_n^{k-1}+a_{n+1}^{k-1},
\end{displaymath}
represented in Table \ref{tab1}.

\begin{table}[h]
	\centering
	\begin{tabular}{@{}l|cccccl@{}}
		\toprule
		\multicolumn{7}{@{}c@{}}{$a_n^k$}\\
		\midrule
		$k \backslash n$ & 0 & 1 & $\cdots$ & $n$ & $n+1$ & $\cdots$ \\
		\midrule
		0& $a_0^0$ & $a_1^0$ & $\cdots$& $a_n^0$ & $a_{n+1}^0$ & $\cdots$\\
		\ \\
		1& $a_0^1$ & $a_1^1$ & $\cdots$& $a_n^1$ & $a_{n+1}^1$ & $\cdots$\\
		$\vdots$ &$\vdots$ & $\vdots$ & $\ddots$ & $\vdots$ & $\vdots$ & $\ddots$ \\
		$k$-1& $a_0^{k-1}$ & $a_1^{k-1}$ & $\cdots$& $a_n^{k-1}$ & $a_{n+1}^{k-1}$ & $\cdots$\\
		\ \\
		$k$& $a_0^k$ & $a_1^k $& $\cdots$& $a_n^k$ & $a_{n+1}^k$ & $\cdots$\\
		$\vdots$ &$\vdots$	& $\vdots$ & $\ddots$ & $\vdots$ & $\vdots$ & $\ddots$ \\
		\bottomrule 
	\end{tabular}
	\caption{\centering Representation Euler-Seidel matrix}
	\label{tab1}
\end{table}
The sequence $\Bigl(a_{n}^{0}\Bigr)$ of the first row of the matrix $\Bigl(a_{n}^{k}\Bigr)$ is called the initial sequence. The sequence $\Bigl(a_{0}^{k}\Bigr)$ of the first column is called the final sequence, $\Bigl(a_{n}^{k}\Bigr)$ represents the $k$-th row and the $n$-th column of the matrix (the first column and the first row being indexed by $0$). \\
We use $\Bigl(a_{n}^{k}\Bigr)$ matrix notation for Euler-Seidel matrix, where $k$ is the row index and $n$ is the column index. But for Riordan matrix we use $\Bigl(r_{k,\ell}\Bigr)$ matrix notation, where $k$ is the row index and $\ell$ is the column index.\\
Given a sequence $\Bigl(g_{n}\Bigr)_{k\geq 0},$ we define by $g(t)=\displaystyle\sum_{n\geq 0}\ g_n^0\ t^n$ the ordinary generating function of $\Bigl(g_{n}\Bigr)$ and $G(t)=\displaystyle\sum_{n\geq 0}\ g_n^0\ \frac{t^n}{n!}$ the exponential generating function of $\Bigl(g_{n}\Bigr).$ So, we convey to denote an ordinary generating function by an lower case, and to denote an exponential generating function by an upper case.\\
We denote by $a(t)=\displaystyle\sum_{n\geq 0}\ a_n^0\ t^n,\ A(t)=\displaystyle\sum_{n\geq 0}\ a_n^0\ \frac{t^n}{n!},\ \Bar{a}(t)=\displaystyle\sum_{n\geq 0}\ a_0^n\ t^n$ and $\Bar{A}(t)=\displaystyle\sum_{n\geq 0}\ a_0^n\ \frac{t^n}{n!}.$\\

\begin{theorem}[Euler \cite{bib2}]\label{theo11}
	We have
	\begin{displaymath} 
		\Bar{a}(t)=\sum_{n\geq 0}\ a_0^n\ t^n=\frac{1}{1-t}\ a\Bigl(\frac{t}{1-t}\Bigr).
	\end{displaymath}
\end{theorem}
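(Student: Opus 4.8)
The plan is to first obtain a closed form for the matrix entries in terms of the initial sequence, and then substitute this into the definition of $\Bar{a}(t)$ and recognise the resulting series. First I would prove, by induction on $k$, the explicit formula
\[
	a_n^k = \sum_{j=0}^{k} \binom{k}{j}\, a_{n+j}^0 .
\]
The base case $k=0$ is immediate. For the inductive step I would invoke the defining recurrence $a_n^k = a_n^{k-1} + a_{n+1}^{k-1}$, insert the inductive hypothesis into each of the two terms, reindex the second sum by shifting $j \mapsto j-1$, and then merge the two binomial coefficients through Pascal's rule $\binom{k-1}{j} + \binom{k-1}{j-1} = \binom{k}{j}$ (the extreme terms $j=0$ and $j=k$ being handled separately). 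Specialising to $n=0$ expresses the final sequence directly in terms of the initial one as $a_0^k = \sum_{j=0}^{k} \binom{k}{j}\, a_j^0$.

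Next I would feed this into the generating function and interchange the order of summation, which is legitimate as a formal power series manipulation since only finitely many terms contribute to each power of $t$:
\[
	\Bar{a}(t) = \sum_{k\geq 0} a_0^k\, t^k = \sum_{k\geq 0}\left(\sum_{j=0}^{k}\binom{k}{j} a_j^0\right) t^k = \sum_{j\geq 0} a_j^0 \sum_{k\geq j}\binom{k}{j} t^k .
\]
The inner sum is a standard binomial series: substituting $m=k-j$ gives $\sum_{k\geq j}\binom{k}{j} t^k = t^{j}(1-t)^{-(j+1)}$.

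Finally, factoring the common $\dfrac{1}{1-t}$ out of the $j$-sum leaves $\displaystyle\sum_{j\geq 0} a_j^0 \Bigl(\tfrac{t}{1-t}\Bigr)^{j}$, which is precisely $a\bigl(\tfrac{t}{1-t}\bigr)$ by the definition of $a(t)$, yielding the claimed identity $\Bar{a}(t) = \tfrac{1}{1-t}\, a\bigl(\tfrac{t}{1-t}\bigr)$. I expect the main obstacle to be establishing the binomial formula cleanly, in particular bookkeeping the reindexing and boundary terms in the inductive step; once that is in hand, the passage to generating functions is routine and relies only on the elementary identity for $\sum_{k\geq j}\binom{k}{j}t^k$.
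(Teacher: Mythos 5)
Your proof is correct. Note that the paper itself gives no proof of this statement: it is recalled as a classical result of Euler, and all of the paper's own machinery is developed for the generalized recurrence $a_n^k=u(n,k)\,a_n^{k-1}+v(n,k)\,a_{n+1}^{k-1}$. Your argument is exactly the elementary specialization of that machinery to $u=v=1$: your binomial formula $a_n^k=\sum_{j=0}^{k}\binom{k}{j}a_{n+j}^0$ is the case of Theorem \ref{theo21} in which every weight equals $1$, so that $C_n(k,\ell)$ reduces to the number of subsets $A\subset[k]$ with $\lvert A\rvert=k-\ell$, namely $\binom{k}{\ell}$; and your evaluation $\sum_{k\geq \ell}\binom{k}{\ell}t^k=t^{\ell}(1-t)^{-(\ell+1)}$ identifies the associated matrix $C_0$ as the Riordan (Pascal) matrix $\bigl(\frac{1}{1-t},\frac{t}{1-t}\bigr)$, after which Theorem \ref{theo4} yields the stated identity. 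The only difference is methodological: you establish the binomial formula by induction via Pascal's rule, whereas the paper's general proof counts weighted lattice paths; for constant weights these are the same computation. Both routes are sound, and the formal interchange of summation you invoke is legitimate since each coefficient of $t^k$ involves only finitely many terms.
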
 
\begin{theorem}[Seidel \cite{bib5}]\label{theo12}
	If $A(t)=\displaystyle\sum_{n\geq 0}\ a_n^0\ \frac{t^n}{n!}$ is the exponential generating function of the initial sequence. Then the exponential generating function $\Bar{A}(t)$ of the final sequence is determined by
	\begin{equation} 
		\Bar{A}(t)=\sum_{n\geq 0}\ a_0^n\ \frac{t^n}{n!}=A(t)\ \exp(t).\label{equa3}
	\end{equation}
\end{theorem}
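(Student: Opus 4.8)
The plan is to prove Seidel's formula by computing the exponential generating function of the final sequence directly from the recurrence, exploiting the fact that the row-operation defining the Euler-Seidel matrix translates into a clean operation on exponential generating functions. First I would introduce, for each fixed row index $k$, the exponential generating function
\begin{displaymath}
A_k(t)=\sum_{n\geq 0}\ a_n^k\ \frac{t^n}{n!},
\end{displaymath}
so that $A_0(t)=A(t)$ is the generating function of the initial sequence, and the quantity we want, $\Bar{A}(t)=\sum_{k\geq 0} a_0^k\, t^k/k!$, reads off the constant terms $a_0^k=A_k(0)$ across the rows. The key step is to show that passing from row $k-1$ to row $k$ multiplies the exponential generating function by $\exp(t)$, that is $A_k(t)=A_{k-1}(t)\exp(t)$, which would immediately give $A_k(t)=A(t)\exp(kt)$ by induction.

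To establish that one-step relation, I would translate the defining recurrence $a_n^k=a_n^{k-1}+a_{n+1}^{k-1}$ into the language of generating functions. The term $a_n^{k-1}$ contributes $A_{k-1}(t)$, while the shifted term $a_{n+1}^{k-1}$ corresponds to the derivative: indeed $\sum_{n\geq 0} a_{n+1}^{k-1}\, t^n/n!=A_{k-1}'(t)$. Hence $A_k(t)=A_{k-1}(t)+A_{k-1}'(t)$. This is a first-order linear relation, and I expect the main obstacle to be handling it cleanly; the natural device is to observe that $A_{k-1}(t)+A_{k-1}'(t)=\exp(-t)\,\dfrac{d}{dt}\bigl(\exp(t)\,A_{k-1}(t)\bigr)$, or alternatively to guess and verify the solution. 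However, the cleanest route avoids differential equations entirely by diagonal summation.

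Therefore, as the main argument I would instead sum the contributions of $a_0^k$ directly. Writing $\Bar{A}(t)=\sum_{k\geq 0} a_0^k\, t^k/k!$ and using the relation $A_k(t)=A(t)\exp(kt)$ proved above evaluated through its coefficients, one extracts $a_0^k$ as the value at an appropriate point; more transparently, one shows by iterating the recurrence that
\begin{displaymath}
a_0^k=\sum_{j=0}^{k}\ \binom{k}{j}\ a_j^0,
\end{displaymath}
a standard consequence of repeatedly applying $a_n^k=a_n^{k-1}+a_{n+1}^{k-1}$ (each step acting like a forward-difference-type binomial expansion). Substituting this closed form into $\Bar{A}(t)$ and swapping the order of summation yields a Cauchy-type product of the two exponential series $\sum_j a_j^0\, t^j/j!$ and $\sum_i t^i/i!$, which factors exactly as $A(t)\exp(t)$, giving \eqref{equa3}. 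The anticipated difficulty is purely bookkeeping: justifying the interchange of summations and matching the binomial coefficients against the Cauchy product, both of which are routine for formal power series.
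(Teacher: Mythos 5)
The paper states this result as a recalled classical theorem of Seidel and gives no proof of its own, so your argument has to stand alone. Your final argument does: iterating $a_n^k=a_n^{k-1}+a_{n+1}^{k-1}$ gives $a_0^k=\sum_{j=0}^{k}\binom{k}{j}a_j^0$ (the $u=v=1$, $n=0$ case of Theorem \ref{theo21}, where $C_0(k,\ell)=\binom{k}{\ell}$), and substituting this into $\Bar{A}(t)$, swapping the finite inner sum with the outer one, and recognizing the Cauchy product of $\sum_j a_j^0 t^j/j!$ with $\sum_i t^i/i!$ yields $A(t)\exp(t)$ exactly as claimed. That part is correct and entirely routine at the level of formal power series.

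However, the ``key step'' you announce first is false. From $\sum_{n\geq 0}a_{n+1}^{k-1}\,t^n/n!=A_{k-1}'(t)$ you correctly obtain $A_k(t)=A_{k-1}(t)+A_{k-1}'(t)$, i.e.\ $A_k=\bigl(1+\frac{d}{dt}\bigr)A_{k-1}$; this operator is not multiplication by $\exp(t)$, and the relations $A_k(t)=A_{k-1}(t)\exp(t)$ and $A_k(t)=A(t)\exp(kt)$ do not hold. Concretely, for $a_n^0\equiv 1$ one has $A(t)=\exp(t)$ and $a_n^1\equiv 2$, so $A_1(t)=2\exp(t)$, whereas $A(t)\exp(t)=\exp(2t)$. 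Multiplication by $\exp(t)$ is precisely the diagonal binomial transform you use later: it produces $\Bar{A}$ from $A$, not $A_k$ from $A_{k-1}$. Since your closing paragraph still invokes ``the relation $A_k(t)=A(t)\exp(kt)$ proved above,'' you should strike that claim entirely and let the binomial identity plus the Cauchy product carry the proof; as written, the argument is complete without it.
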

Firengiz and Dil \cite{bib3} followed Dumont's idea by adding constant coefficients.
\begin{theorem}[Firengiz and Dil \cite{bib3}]\label{theo13}
	If  $a_n^k=x\ a_n^{k-1}+y\ a_{n+1}^{k-1}$ for given constants $x$ and $y$. Then, we have
	\begin{displaymath} 
		\sum_{n\geq 0}\ \sum_{k\geq 0}\ a_n^k\ \frac{u^k}{k!}\ \frac{t^n}{n!}=\exp(xu)\ A\bigl(t+uy\bigr).
	\end{displaymath}
\end{theorem}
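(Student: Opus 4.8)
The plan is to first obtain an explicit closed form for the general entry $a_n^k$ in terms of the initial sequence $(a_m^0)$, and then substitute it into the double exponential generating function. Iterating the recurrence $a_n^k = x\,a_n^{k-1} + y\,a_{n+1}^{k-1}$ a couple of times suggests the binomial pattern
\[
a_n^k = \sum_{j=0}^{k} \binom{k}{j}\, x^{k-j}\, y^{j}\, a_{n+j}^0,
\]
which I would establish by induction on $k$. For the inductive step I would insert the formula for $a_n^{k-1}$ and $a_{n+1}^{k-1}$, shift the summation index in the second of the two resulting sums, and then collapse them using Pascal's rule $\binom{k-1}{j} + \binom{k-1}{j-1} = \binom{k}{j}$.

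With the closed form in hand, I would plug it into $S := \sum_{n,k\ge 0} a_n^k\, \frac{u^k}{k!}\,\frac{t^n}{n!}$ and use the identity $\binom{k}{j}/k! = 1/\bigl(j!\,(k-j)!\bigr)$ to rewrite the combinatorial weight. Setting $m = k - j$ decouples the powers of $u$ via $u^k = u^m\, u^j$, so the triple sum separates: the sum over $m$ of $(xu)^m/m!$ produces the factor $\exp(xu)$, leaving
\[
S = \exp(xu) \sum_{n\ge 0}\sum_{j\ge 0} \frac{(yu)^j}{j!}\, a_{n+j}^0\, \frac{t^n}{n!}.
\]

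The decisive step is to recognize the remaining double sum as $A(t + uy)$. Running the binomial theorem in reverse, I would expand $A(t+uy) = \sum_{r\ge 0} a_r^0\,(t+uy)^r / r!$, apply $(t+uy)^r = \sum_{j=0}^r \binom{r}{j}\, t^{r-j}\,(uy)^j$, and re-index by $n = r - j$; this reproduces the double sum above term by term, which finishes the identification. I expect the main obstacle to be purely organizational, namely keeping the index shifts consistent and verifying that the substitution $n = r - j$ matches the two expressions coefficient by coefficient. No analytic difficulty intervenes, since every manipulation is a legitimate rearrangement of formal power series in the two variables $t$ and $u$, so convergence is never at issue.
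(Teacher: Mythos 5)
The paper states this result only as a recalled theorem of Firengiz and Dil and gives no proof of its own, so there is nothing to compare against line by line; judged on its own terms, your argument is correct and complete. The closed form
\begin{displaymath}
a_n^k=\sum_{j=0}^{k}\binom{k}{j}\,x^{k-j}\,y^{j}\,a_{n+j}^{0}
\end{displaymath}
follows by the induction you describe (Pascal's rule after the index shift), and the subsequent substitution $m=k-j$ legitimately splits the triple sum of formal power series into $\exp(xu)$ times $\sum_{n,j}\frac{(yu)^j}{j!}\,a_{n+j}^0\,\frac{t^n}{n!}$, which your reverse binomial expansion correctly identifies with $A(t+uy)$. It is worth noting that your closed form is exactly the constant-coefficient specialization of the paper's later Theorem \ref{theo21} (take $u(n,k)=x$, $v(n,k)=y$, so that $C_n(k,\ell)=\binom{k}{\ell}x^{k-\ell}y^{\ell}$), so your induction is a direct, elementary route to a fact the paper obtains from its general weighted-path machinery; your approach buys self-containedness for this special case, while the paper's buys generality for non-constant coefficients.
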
 
\begin{definition}[Shapiro, Getu, Woan and Woodson \cite{bib7}, Sprugnoli \cite{bib8}]\label{defi11}
	Consider an infinite matrix \mbox{$R=\Bigl(r_{k,\ell}\Bigr)_{k,\ell\geq 0}$} with complex coefficients and two formal power series \mbox{$g(t)=\displaystyle\sum_{\ell\geq 0}\ g_{\ell}\ t^{\ell},$} \mbox{$f(t)=\displaystyle\sum_{\ell\geq 1}\ f_{\ell}\ t^{\ell}$} with \mbox{$g_0=1,\ f_1\neq 0.$} Let \mbox{$R_{\ell}(t)=\displaystyle\sum_{k\geq 0}\ r_{k,{\ell}}\ t^k$} be the formal power series of the $\ell$-th column of $R$ (the first column being indexed by $0$). Then, $R$ is called a Riordan matrix if
	\begin{equation} 
		R_{\ell}(t)=g(t)\Bigl[f(t)\Bigr]^{\ell}.\label{equ6}
	\end{equation}
	$R$ is an infinite lower-triangular matrix which we denote a pair \mbox{$R=\Bigl(g(t),f(t)\Bigr).$} Note that the identity $I=(1,t)$ is a Riordan matrix.
\end{definition}
\begin{remark}
	 We have $g_k=r_{k,0}$ and
	\begin{equation} 
			R_{\ell}(t)=G(t)\frac{\Bigl[F(t)\Bigr]^{\ell}}{\ell!}.\label{equ66}
	\end{equation}
	We denote $R=\Bigl[G(t),F(t)\Bigr].$
\end{remark}
\begin{theorem}[Shapiro \cite{bib6}, Shapiro, Getu, Woan and Woodson \cite{bib7}] \label{theo4}
	We have
	\begin{equation}
		\Bigl(g(t),f(t)\Bigr)\ 
		\begin{bmatrix}
			a_0\\
			a_1\\
			a_2\\
			\vdots
			\end{bmatrix}=
			\begin{bmatrix}
			b_0\\
			b_1\\
			b_2\\
			\vdots
		\end{bmatrix}
		\iff g(t)\ a\bigl(f(t)\bigr)=b(t). \label{equ7}
	\end{equation}
\end{theorem}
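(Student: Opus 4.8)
The plan is to prove both implications at once by directly computing the ordinary generating function of the vector on the right-hand side of \eqref{equ7} and showing that it equals $g(t)\,a\bigl(f(t)\bigr)$. Writing out the matrix-vector product, the $k$-th entry of the left-hand side is $b_k=\sum_{\ell\geq 0}r_{k,\ell}\,a_\ell$, so I would first assemble $b(t)=\sum_{k\geq 0}b_k\,t^k=\sum_{k\geq 0}\Bigl(\sum_{\ell\geq 0}r_{k,\ell}\,a_\ell\Bigr)t^k$ and interchange the two summations to obtain $b(t)=\sum_{\ell\geq 0}a_\ell\sum_{k\geq 0}r_{k,\ell}\,t^k=\sum_{\ell\geq 0}a_\ell\,R_\ell(t)$, where $R_\ell(t)$ is the generating function of the $\ell$-th column of $R$.

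Next I would substitute the defining relation \eqref{equ6}, namely $R_\ell(t)=g(t)\bigl[f(t)\bigr]^\ell$, which factors $g(t)$ out of the sum and leaves $b(t)=g(t)\sum_{\ell\geq 0}a_\ell\bigl[f(t)\bigr]^\ell=g(t)\,a\bigl(f(t)\bigr)$, recognizing $a(t)=\sum_{\ell\geq 0}a_\ell\,t^\ell$. This is precisely the right-hand side of \eqref{equ7}. For the converse, I would observe that the identity $b(t)=g(t)\,a\bigl(f(t)\bigr)$ of formal power series is equivalent to the equality of the coefficients $[t^k]b(t)$ for every $k$, and reading off these coefficients reproduces exactly the linear system $b_k=\sum_{\ell\geq 0}r_{k,\ell}\,a_\ell$; hence the two statements are genuinely equivalent rather than merely one-directional, and the same chain of equalities establishes both arrows.

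The step that needs care is the interchange of the order of summation, and here I would rely on the lower-triangular structure guaranteed by Definition \ref{defi11}. Since $f_1\neq 0$ and $f(t)$ has no constant term, $\bigl[f(t)\bigr]^\ell$ begins at order $t^\ell$, so $r_{k,\ell}=0$ whenever $\ell>k$. Consequently, for each fixed $k$ the inner sum $\sum_{\ell\geq 0}r_{k,\ell}\,a_\ell$ collapses to the finite sum $\sum_{\ell=0}^{k}r_{k,\ell}\,a_\ell$, and likewise the coefficient of $t^k$ in $\sum_{\ell\geq 0}a_\ell\,R_\ell(t)$ is a finite sum. This finiteness makes the rearrangement a legitimate manipulation of formal power series rather than an analytic convergence question, and it is the only subtle point in an otherwise direct computation.
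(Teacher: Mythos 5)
The paper states Theorem \ref{theo4} as a recalled classical result (attributed to Shapiro et al.) and supplies no proof of its own, so there is nothing internal to compare against; your argument is the standard proof of the fundamental theorem of Riordan arrays and is correct. You also correctly isolate the one point needing justification---the interchange of summation---and settle it via the lower-triangular structure $r_{k,\ell}=0$ for $\ell>k$, which is the same fact (namely that $f$ has zero constant term, so $\bigl[f(t)\bigr]^{\ell}$ starts at order $t^{\ell}$) that makes the composition $a\bigl(f(t)\bigr)$ well defined as a formal power series.
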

\begin{remark}
	 We have
	\begin{equation}
	\Bigl[G(t),F(t)\Bigr]\ 
	\begin{bmatrix}
	a_0\\
	a_1\\
	a_2\\
	\vdots
	\end{bmatrix}=
	\begin{bmatrix}
	b_0\\
	b_1\\
	b_2\\
	\vdots
	\end{bmatrix}
	\iff G(t)\ A\bigl(F(t)\bigr)=B(t). \label{equ08}
	\end{equation}
\end{remark}
\begin{theorem}[Shapiro \cite{bib6}, Shapiro and al.\cite{bib7}]\label{theo16}
	The Riordan group \mbox{$\mathcal{R}=${\small $\Bigl\{\Bigl(g(t),f(t)\Bigr) \mid \Bigl( g(t), f(t)\Bigr)\ \text{is a Riordan matrix set with } f(t)=t+f_2\ t^2+\ldots\Bigr\}.$}} In other words, each element of $\mathcal{R}$ is a lower triangular matrix with $1$ on the main diagonal. The multiplication in $\mathcal{R}$ is defined by
	\begin{displaymath} 
		\Bigl(g(t),f(t)\Bigr)\ \Bigl(\ell(t),k(t)\Bigr)=\Bigl(g(t)\ \ell\bigl(f(t)\bigr),k\bigl(f(t)\bigr)\Bigr).
	\end{displaymath} 
	The Riordan matrix multiplication identity is $I=(1,t),$ that is
	\begin{displaymath} 
		(1,t)\ \Bigl(g(t),f(t)\Bigr)=\Bigl(g(t),f(t)\Bigr)\ (1,t)=\Bigl(g(t),f(t)\Bigr).
	\end{displaymath}  
		The inverse of $(g(t),f(t))$ is defined by
	\begin{equation} 
		\Bigl(\substack{\dfrac{1}{g\bigl(\bar{f}(t)\bigr)}},\bar{f}(t)\Bigr),\label{equa14}
	\end{equation}
	where $\bar{f}(t)$ is the inverse of $f,$ that is
	\begin{displaymath} 
		f\bigl(\bar{f}(t)\bigr)=\bar{f}\bigl(f(t)\bigr)=t.
	\end{displaymath}
\end{theorem}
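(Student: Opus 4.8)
The plan is to verify the group axioms for $\mathcal{R}$, the heart of which is the multiplication formula; once that is in hand, the identity element and the inverse formula follow by purely formal manipulation. Closure and associativity come almost for free: associativity is inherited from ordinary (infinite, lower-triangular) matrix multiplication, which is well defined here because each entry of a product is a finite sum, and closure is exactly the content of the multiplication formula itself. So the genuine work is to prove
\begin{displaymath}
	\Bigl(g(t),f(t)\Bigr)\ \Bigl(\ell(t),k(t)\Bigr)=\Bigl(g(t)\ \ell\bigl(f(t)\bigr),k\bigl(f(t)\bigr)\Bigr).
\end{displaymath}

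To prove the multiplication formula I would argue column by column using the fundamental Theorem \ref{theo4}. Write $S=\bigl(\ell(t),k(t)\bigr)$ and recall from Definition \ref{defi11} that its $j$-th column has generating function $S_j(t)=\ell(t)\bigl[k(t)\bigr]^{j}$. Multiplying the Riordan matrix $R=\bigl(g(t),f(t)\bigr)$ against this $j$-th column amounts to feeding the sequence whose ordinary generating function is $S_j$ into the left-hand side of \eqref{equ7}. By Theorem \ref{theo4} the resulting column has generating function
\begin{displaymath}
	g(t)\ S_j\bigl(f(t)\bigr)=g(t)\ \ell\bigl(f(t)\bigr)\ \Bigl[k\bigl(f(t)\bigr)\Bigr]^{j}.
\end{displaymath}
But this is precisely the $j$-th column of the Riordan matrix $\bigl(g(t)\ell(f(t)),k(f(t))\bigr)$, again by \eqref{equ6}. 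Since the two matrices agree in every column, they are equal, which establishes the formula.

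With the multiplication formula available, the identity claim is immediate: taking $(\ell,k)=(1,t)$ on the right gives $\bigl(g(t)\cdot 1,\ f(t)\bigr)=\bigl(g(t),f(t)\bigr)$, while taking $(g,f)=(1,t)$ on the left gives $\bigl(1\cdot\ell(t),k(t)\bigr)=\bigl(\ell(t),k(t)\bigr)$, so $(1,t)$ is a two-sided identity. For the inverse, I would \emph{solve} the equation $\bigl(g(t),f(t)\bigr)\bigl(\ell(t),k(t)\bigr)=(1,t)$ for the unknown pair $(\ell,k)$. The multiplication formula turns this into the two scalar-series equations $k\bigl(f(t)\bigr)=t$ and $g(t)\,\ell\bigl(f(t)\bigr)=1$. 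The first forces $k=\bar f$, the compositional inverse of $f$, which exists because $f(t)=t+f_2t^2+\cdots$ has invertible linear term. Substituting $t\mapsto\bar f(t)$ into the second and using $f\bigl(\bar f(t)\bigr)=t$ yields $\ell(t)=1/g\bigl(\bar f(t)\bigr)$, giving exactly \eqref{equa14}; the symmetric computation $\bigl(\ell(t)g(\bar f(t)),f(\bar f(t))\bigr)=(1,t)$ confirms it is also a left inverse.

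The main obstacle, and the step deserving the most care, is the column-by-column application of Theorem \ref{theo4}: one must check that the fundamental theorem, stated for a single sequence/vector pair, legitimately applies when the input sequence is itself the $j$-th column of $S$, and that the index bookkeeping ($k$ for rows, $\ell$ for columns) stays consistent throughout. A secondary subtlety is ensuring that all the compositions are well defined as formal power series; this is where the hypotheses $g_0=1$ and $f(t)=t+\cdots$ are used, both to make $\bar f$ exist and to guarantee that $\ell\bigl(f(t)\bigr)$ and $k\bigl(f(t)\bigr)$ have expansions of nonnegative order, so that the inverse again lies in $\mathcal{R}$.
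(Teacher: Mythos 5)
The paper states this theorem purely as a recalled result, cited to Shapiro \cite{bib6} and Shapiro et al.\ \cite{bib7}, and supplies no proof of its own, so there is no internal argument to compare yours against. Your proof is the standard one and is correct: the column-by-column application of Theorem \ref{theo4} to the columns $S_j(t)=\ell(t)[k(t)]^j$ of the right factor is exactly the right way to obtain the multiplication rule, and the identity and the inverse $\bigl(1/g(\bar{f}(t)),\bar{f}(t)\bigr)$ then follow by solving $k(f(t))=t$ and $g(t)\,\ell(f(t))=1$ as you do, with the hypotheses $g_0=1$ and $f(t)=t+f_2t^2+\cdots$ guaranteeing that $\bar{f}$ exists and that the product and inverse remain in $\mathcal{R}$.
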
 

In this paper, we introduce the coefficients $u(n,k)$ and $v(n,k),$ such that $a_n^k=u(n,k)\ a_n^{k-1}+v(n,k)\ a_{n+1}^{k-1}.$
We give formula defining the sequence $\Bigl(a_n^k\Bigr)$ in terms of the initial and final sequences. We establish relations between generating functions of the initial and final sequences. Applications are given to sequences $\Bigl(\delta_n^k\Bigr)$ introduced by \mbox{Rakotondrajao \cite{bib4}} (in table of $d_n^k$). Recall that a permutation is a \mbox{n-fixed-points-permutations} if the set of its fixed points is a subset of $[n]$ and the first $n$ integers are in different cycles. Let denote by $\mathcal{D}_n^k$ the set of $n$-fixed-points-permutations of length $(n+k),\ \lvert \mathcal{D}_n^k\rvert=\delta_n^k.$

\section{Euler-Seidel matrix with two parameter coefficients} \label{sec2}

Let us consider two sequences $u$ and $v$ with two variables $n$ and $k$. We denote by $M(u,v,a)=\Bigl(a_{n}^{k}\Bigr)$ the infinite matrix defined by
\begin{equation} 
	\begin{cases}
	a_n^0&=a_n\ {\bigl(n\geq 0 \bigr)};\\
	a_n^k&=u(n,k)\ a_n^{k-1}+v(n,k)\ a_{n+1}^{k-1}\ {\bigl(n\geq 0,\ k\geq 1\bigr)}.
	\end{cases}\label{equa18}
\end{equation} 	
We give in the following theorem the expressions of the coefficient $a_{n}^{k}$ for a fixed pair $(n,k)$, in terms of given the initial and final sequences.
\begin{theorem}\label{theo21}
	For any integers $n$ and $k$, the coefficient $a_n^k$ is determined by the formula
	\begin{equation} 
		a_n^k=\sum_{\ell=0}^{k}\ C_n(k,\ell)\ a_{n+\ell}^0,\label{equa19}
	\end{equation}
	where
	\begin{equation}
		C_n(k,\ell)=\sum_{\substack{A\subset[k]\\ \lvert A\rvert=k-\ell}}\ \prod_{\substack{0\le i\le k-\ell -1\\ h_i\in A\\ h_i>h_{i+1}}} u(n+k-h_i-i,h_i)\\ \prod_{\substack{0\le j \le \ell -1\\ m_j\in [k]\setminus A\\ m_j>m_{j+1}}} v(n+j,m_j).\label{equa190}
	\end{equation}
\end{theorem}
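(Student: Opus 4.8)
The plan is to argue by induction on $k$, reducing the statement to a single recurrence satisfied by the coefficients $C_n(k,\ell)$. In the base case $k=0$ the index set $[k]$ is empty, so $A=\emptyset$ is the only admissible subset, both products in \eqref{equa190} are empty, and therefore $C_n(0,0)=1$; formula \eqref{equa19} then reads $a_n^0=a_n^0$, which holds. For the inductive step I would assume \eqref{equa19} for $k-1$ and every column index, substitute the expansions of $a_n^{k-1}$ and $a_{n+1}^{k-1}$ into the defining relation \eqref{equa18}, and shift the summation index of the $v$-term by one. Collecting the coefficient of $a_{n+\ell}^0$ shows that the theorem follows once I establish
\begin{equation}
	C_n(k,\ell)=u(n,k)\,C_n(k-1,\ell)+v(n,k)\,C_{n+1}(k-1,\ell-1),\label{plan-rec}
\end{equation}
with the conventions $C_n(k-1,k)=0$ and $C_{n+1}(k-1,-1)=0$ taking care of the boundary values $\ell=k$ and $\ell=0$.

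To prove \eqref{plan-rec} I would split the defining sum of $C_n(k,\ell)$ according to whether the largest index $k$ belongs to $A$ or to its complement $[k]\setminus A$. If $k\in A$, then listing $A$ in decreasing order forces $h_0=k$, whose $u$-factor is $u(n+k-h_0-0,h_0)=u(n,k)$; deleting this element leaves a subset of $[k-1]$ of cardinality $k-\ell-1$, and the surviving factors should reassemble into $C_n(k-1,\ell)$. If instead $k\notin A$, then $k$ is the largest element of $[k]\setminus A$, so $m_0=k$ with $v$-factor $v(n+0,k)=v(n,k)$; removing it leaves a subset $A\subset[k-1]$ of cardinality $k-\ell$, which should match $C_{n+1}(k-1,\ell-1)$.

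The heart of the argument, and the step I expect to be the main obstacle, is checking that the index shifts inside the two products are compatible with this bookkeeping. It helps to record first that, for a fixed $A$, the $i$-th largest element $h_i$ of $A$ occupies column $n+k-h_i-i$ and the $j$-th largest element $m_j$ of $[k]\setminus A$ occupies column $n+j$, these being exactly the arguments appearing in \eqref{equa190}. When the top index $k$ is stripped away the rank of each surviving element in its own list drops by one, and I would verify that the compensating changes in the parameters ($k\mapsto k-1$ when $k\in A$, and $n\mapsto n+1$ together with $k\mapsto k-1$ and $\ell\mapsto\ell-1$ when $k\notin A$) leave every individual $u$- and $v$-factor invariant. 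Granting these verifications, the two cases add up to the right-hand side of \eqref{plan-rec}, and the induction closes.

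Finally, I would point out that \eqref{equa19} carries a transparent combinatorial meaning that both motivates the formula and gives an independent check. Unrolling \eqref{equa18} a total of $k$ times writes $a_n^k$ as a sum over paths descending from row $k$ to row $0$, where the step taken at row $r$ contributes a factor $u(\,\cdot\,,r)$ if it stays in its column and a factor $v(\,\cdot\,,r)$ if it moves one column to the right. Encoding such a path by the set $[k]\setminus A$ of rows at which it moves right, it terminates at column $n+\ell$ with $\ell=\lvert[k]\setminus A\rvert$ and contributes $a_{n+\ell}^0$ weighted by the product displayed in \eqref{equa190}; grouping the paths by $\ell$ recovers exactly \eqref{equa19}.
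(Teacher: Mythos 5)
Your argument is correct, but it reaches \eqref{equa19} by a different route than the paper. The paper's proof is exactly the observation you save for your last paragraph: it unrolls \eqref{equa18} all at once, encodes each resulting term as a lattice path recorded by the set $A$ of levels at which the path goes north, and reads off the column argument of each weight from the number of north-east steps already taken (there are $k-h_i-i$ elements of $[k]\setminus A$ above $h_i$, and $j$ above $m_j$, whence the arguments $n+k-h_i-i$ and $n+j$ in \eqref{equa190}). Your main proof instead proceeds by induction on $k$ through the coefficient recurrence $C_n(k,\ell)=u(n,k)\,C_n(k-1,\ell)+v(n,k)\,C_{n+1}(k-1,\ell-1)$, verified by splitting the sum over $A$ according to whether $k\in A$. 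The index-shift checks you flag as the main obstacle do go through: if $k\in A$ then $h_0=k$ contributes $u(n,k)$, and for $i\ge 1$ the element $h_i$ is the $(i-1)$-st largest element of $A\setminus\{k\}\subset[k-1]$, so $u(n+k-h_i-i,h_i)$ is literally the corresponding factor of $C_n(k-1,\ell)$ while the $v$-factors are untouched; if $k\notin A$ then $m_0=k$ contributes $v(n,k)$, and rewriting $n+j=(n+1)+(j-1)$ and $n+k-h_i-i=(n+1)+(k-1)-h_i-i$ identifies the remaining factors with those of $C_{n+1}(k-1,\ell-1)$. Your inductive version is in fact the more careful of the two --- the paper asserts the path weight without verifying this column bookkeeping --- but to count as a complete proof you must actually carry out the verifications you say you ``would'' perform, as sketched above, rather than leave them as intentions.
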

\begin{proof}
	Recall that each path from $a_{n}^k$ to $a_0^{n+k}$ is identified with a sequence $(s_1,s_2,\ldots,s_k),$ where $s_i=u$ (north direction) or $s_i=v$ (north-east direction), for $1\le i\le k,$ with exactly $(k-\ell)\ u$ and $\ell \ v.$ A such path is therefore identified with a subset $A\subset[k]$ such that $\lvert A\rvert=k-\ell$ and $i\in A\iff s_i=u.$ The north direction from $a_i^j$ to $a_i^{j-1}$ in the path of this network is weighted
	by $u(i,j)$ and the north-east direction from $a_i^j$ to $a_{i-1}^{j+1}$ is weighted by $v(i,j).$ The weight of a such path is
	\begin{displaymath}
		\prod_{\substack{0 \le i \le k- \ell -1\\ h_i\in A\\ h_i > h_{i+1}}} u(n+k-h_i-i,h_i)\ \prod_{\substack{0\le j \le \ell -1\\ m_j\in [k]\setminus A\\ m_j > m_{j+1}}} v(n+j,m_j).
	\end{displaymath}
	Hence the total weight $C_n(k,\ell)$ in the path from $a_n^k$ to vertex $a_{n+\ell}^0$ \mbox{($0\le \ell \le k$)} is equal to
	\begin{displaymath}
		C_n(k,\ell)=\sum_{\substack{A\subset[k]\\ \lvert A\rvert=k-\ell}}\ \prod_{\substack{0 \le i \le k- \ell -1\\ h_i\in A\\ h_i > h_{i+1}}} u(n+k-h_i-i,h_i)\ \prod_{\substack{0\le j \le \ell -1\\ m_j\in [k]\setminus A\\ m_j > m_{j+1}}} v(n+j,m_j).
	\end{displaymath}
	So
	\begin{displaymath}
		a_n^k=\sum_{\ell=0}^{k}\ C_n(k,\ell)\ a_{n+\ell}^0.
	\end{displaymath}
	The Fig. \ref{fig1} represents the rooted network $a_{i}^j$ weighted by $u(i,j)$ and $v(i,j)$, $(i=3,j=4).$
	\begin{figure}[H]
		\centering
		\includegraphics[width=4.5in]{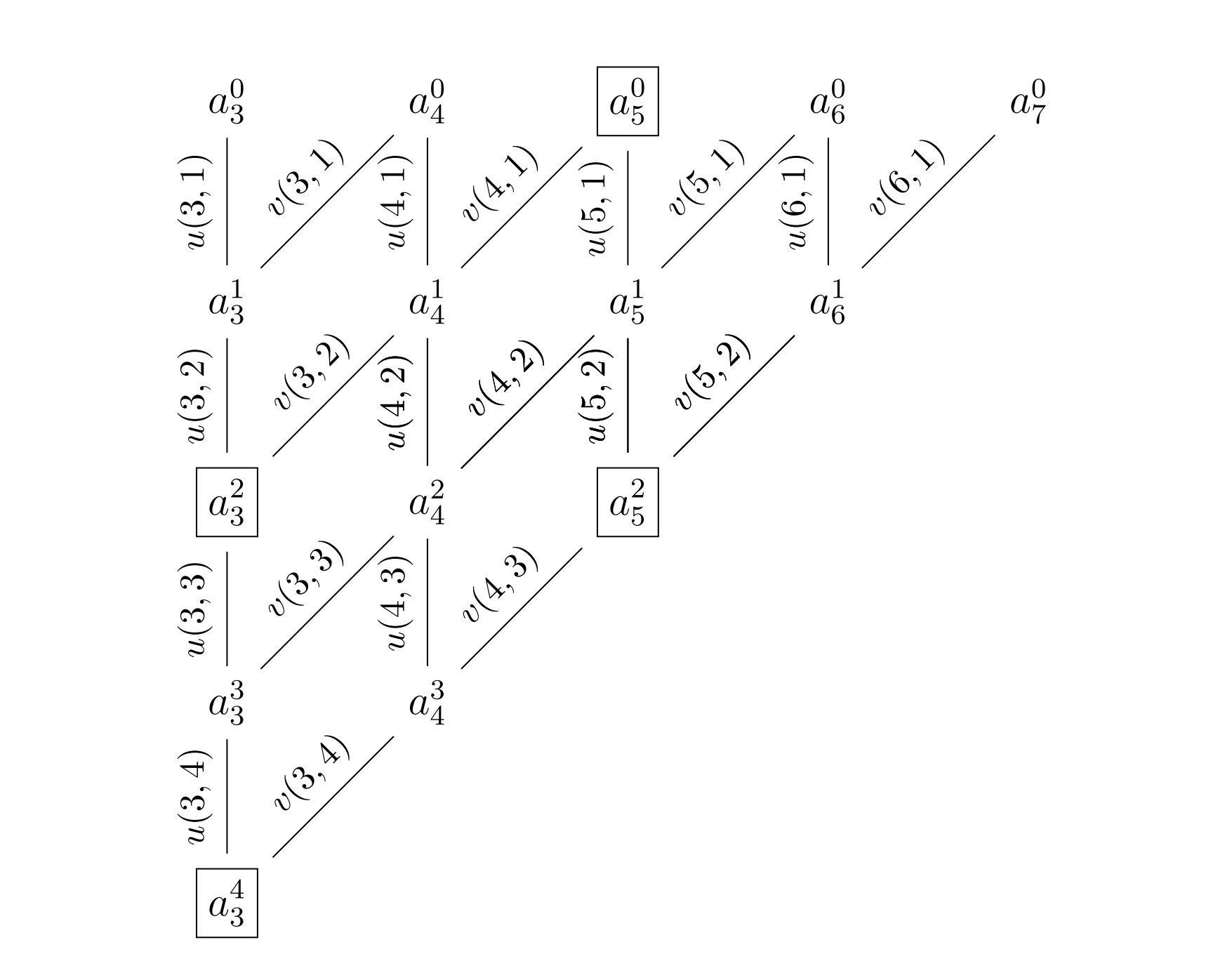}	
		\caption{\centering Rooted network $a_i^j$ weighted by $u(i,j)$ and $v(i,j)$, ($i=3,\ j=4$).}
		\label{fig1}
	\end{figure} 
\end{proof}
\begin{remark}
	We call $C_n=\Bigl(C_n(k,\ell)\Bigr)_{k,\ell\in \mathbb{N}}$ the infinite triangular matrix associated to the column $n$ of $M(u,v,a).$
\end{remark}
\begin{theorem}\label{theo22}
	For any integers $n$ and $k$, we have
	\begin{equation}
		\begin{aligned}
			a_n^k=\sum_{\ell=0}^{n} & \sum_{\substack{A\subset[n]\\ \lvert A\rvert=n-\ell}}\ \prod_{\substack{0 \le i \le n- \ell -1\\ h_i\in A\\ h_i > h_{i+1}}} \frac{-u(k+n-h_i-i+1,h_i-1)}{v(k+n-h_i-i+1,h_i-1)}\\
			&\qquad \ \prod_{\substack{0\le j \le \ell -1\\ m_j\in [n]\setminus A\\ m_j > m_{j+1}}} \frac{1}{v(k+j+1,m_j-1)}\ a_0^{k+\ell}.
		\end{aligned} \label{equa20}
	\end{equation}
\end{theorem}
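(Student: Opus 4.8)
The plan is to deduce (\ref{equa20}) from Theorem~\ref{theo21} by inverting the defining recurrence (\ref{equa18}) and transposing the array, so that writing $a_n^k$ in terms of the final sequence becomes nothing but writing a transposed Euler--Seidel matrix in terms of \emph{its} own initial sequence.

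First I would invert (\ref{equa18}). Applying the defining relation at the pair $(n-1,k+1)$ gives $a_{n-1}^{k+1}=u(n-1,k+1)\,a_{n-1}^{k}+v(n-1,k+1)\,a_{n}^{k}$, and solving for the column-$n$ term (the denominators being nonzero, as they already appear in (\ref{equa20})) yields
\[
 a_n^k=\frac{-u(n-1,k+1)}{v(n-1,k+1)}\,a_{n-1}^{k}+\frac{1}{v(n-1,k+1)}\,a_{n-1}^{k+1}.
\]
This two-term relation lowers the column index by one while either preserving the row index (weight $-u/v$) or raising it by one (weight $1/v$); iterating it $n$ times carries $a_n^k$ down to column $0$, i.e.\ onto the final sequence. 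Geometrically this is the reversal of the weighted network used for Theorem~\ref{theo21}.

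Next I would phrase this as a single use of Theorem~\ref{theo21}. Set $b_{\kappa}^{\nu}:=a_{\nu}^{\kappa}$, interchanging the two indices. The inverse recurrence just obtained says exactly that $B=(b_{\kappa}^{\nu})$ is an Euler--Seidel matrix of type (\ref{equa18}) for the weights
\[
 \tilde u(\kappa,\nu)=\frac{-u(\nu-1,\kappa+1)}{v(\nu-1,\kappa+1)},\qquad \tilde v(\kappa,\nu)=\frac{1}{v(\nu-1,\kappa+1)},
\]
and that the initial sequence of $B$, namely $b_{\kappa}^{0}=a_{0}^{\kappa}$, is precisely the final sequence of $M(u,v,a)$. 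Theorem~\ref{theo21} applied to $B$ therefore gives $a_n^k=b_{k}^{n}=\sum_{\ell=0}^{n}\tilde C_{k}(n,\ell)\,a_0^{k+\ell}$, where $\tilde C_{k}(n,\ell)$ is obtained from (\ref{equa190}) by interchanging the roles of $n$ and $k$ and replacing $u,v$ with $\tilde u,\tilde v$.

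The last and main step is to substitute the explicit $\tilde u,\tilde v$ into $\tilde C_{k}(n,\ell)$ and simplify. The $u$-type factor $\tilde u(k+n-h_i-i,\,h_i)$ becomes $-u(h_i-1,\,k+n-h_i-i+1)/v(h_i-1,\,k+n-h_i-i+1)$, and the $v$-type factor $\tilde v(k+j,\,m_j)$ becomes $1/v(m_j-1,\,k+j+1)$; once each argument pair is recorded in the coordinate order induced by the reversed network (which interchanges the row and column slots relative to (\ref{equa18})), these are exactly the two products displayed in (\ref{equa20}). I expect this index bookkeeping to be the only real difficulty: one must track, along the decreasing enumerations $h_i>h_{i+1}$ and $m_j>m_{j+1}$, how many row-raising steps have already been taken, since that is what pins down the shifts $h_i-1$, $m_j-1$ and $k+n-h_i-i+1$, $k+j+1$. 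As a check I would confirm the claim directly for $n=0$, where it collapses to $a_0^k=a_0^k$, and for $n=1$, where it must reproduce the single inverse-recurrence step, before relying on the general substitution.
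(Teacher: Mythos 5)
Your proposal is correct and follows essentially the same route as the paper: invert the recurrence \eqref{equa18} to get $a_n^k=\frac{-u(n-1,k+1)}{v(n-1,k+1)}\,a_{n-1}^{k}+\frac{1}{v(n-1,k+1)}\,a_{n-1}^{k+1}$, transpose via $b_k^n=a_n^k$, and then run the weighted-path argument of Theorem~\ref{theo21} on the transposed array whose initial sequence is the final sequence of $M(u,v,a)$. The only cosmetic difference is that you invoke Theorem~\ref{theo21} as a black box on the transposed matrix, whereas the paper re-describes the rooted network for $b_j^i$ ``by similarity''; the index bookkeeping (including the swapped argument order in $u$ and $v$) comes out the same either way.
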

\begin{proof}
	From equation \eqref{equa18}, we get
	\begin{displaymath}
		a_n^k=\frac{-u(n-1,k+1)}{v(n-1,k+1)}\ a_{n-1}^{k}+\frac{1}{v(n-1,k+1)}\ a_{n-1}^{k+1}.
	\end{displaymath} 
	Let $a_n^k=b_k^n,$ we have
	\begin{displaymath}
		b_k^n=\frac{-u(k+1,n-1)}{v(k+1,n-1)}\ b_{k}^{n-1}+\frac{1}{v(k+1,n-1)}\ b_{k+1}^{n-1}.
	\end{displaymath} 
	In this rooted network $b_k^n$ and vertices $b_{k+\ell}^0,$ for $0\le \ell \le n,$ we assign a weight $\frac{-u(j+1,i-1)}{v(j+1,i-1)}$ on the step from $b_j^i$ to $b_j^{i-1}$ and a weight $\frac{1}{v(j+1,i-1)}$ on the step from $b_j^i$ to $b_{j+1}^{i-1}.$ By similarity to the proof of Theorem \ref{theo21}, the weight of each path from $b_k^n$ to vertex $b_{k+\ell}^0$ is
	\begin{displaymath}
		\prod_{\substack{0\le i\le n- \ell -1\\ h_i\in A\\ h_i > h_{i+1}}} \frac{-u(k+n-h_i-i+1,h_i-1)}{v(k+n-h_i-i+1,h_i-1)} \prod_{\substack{0\le j \le \ell -1\\ m_j\in [n]\setminus A\\ m_j > m_{j+1}}} \frac{1}{v(k+j+1,m_j-1)};
	\end{displaymath}
	where $A\subset[n]$ such that $\lvert A\rvert=n-\ell.$ The total weight of the path from $b_k^n$ to vertex $b_{k+\ell}^0$ is equal to
	\begin{displaymath}
		\sum_{\substack{A\subset[n]\\ \lvert A\rvert =n-\ell}}\ \prod_{\substack{0 \le i\le n- \ell -1\\ h_i\in A\\ h_i > h_{i+1}}} \frac{-u(k+n-h_i-i+1,h_i-1)}{v(k+n-h_i-i+1,h_i-1)}\ \prod_{\substack{0\le j\le \ell -1\\ m_j\in [n]\setminus A\\ m_j > m_{j+1}}} \frac{1}{v(k+j+1,m_j-1)}.
	\end{displaymath}
	Hence,
	\begin{displaymath}
		\begin{aligned}
			b_k^n&=\sum_{\ell=0}^{n}\ \sum_{\substack{A\subset[n]\\ \lvert A\rvert=n-\ell}}\ \prod_{\substack{0 \le i \le n- \ell -1\\ h_i\in A\\ h_i > h_{i+1}}} \frac{-u(k+n-h_i-i+1,h_i-1)}{v(k+n-h_i-i+1,h_i-1)}\\
			&\qquad \ \prod_{\substack{0\le j \le \ell -1\\ m_j\in [n]\setminus A\\ m_j > m_{j+1}}} \frac{1}{v(k+j+1,m_j-1)}\ b_{k+\ell}^0.
		\end{aligned}
	\end{displaymath}
	The Fig. \ref{fig2} represents the rooted network $b_{j}^i$ weighted by $\frac{-u(j+1,i-1)}{v(j+1,i-1)}$ and $\frac{1}{v(j+1,i-1)}$, ($j=3,\ i=4$).
	\begin{figure}[H]
		\centering
		\includegraphics[width=4.5in]{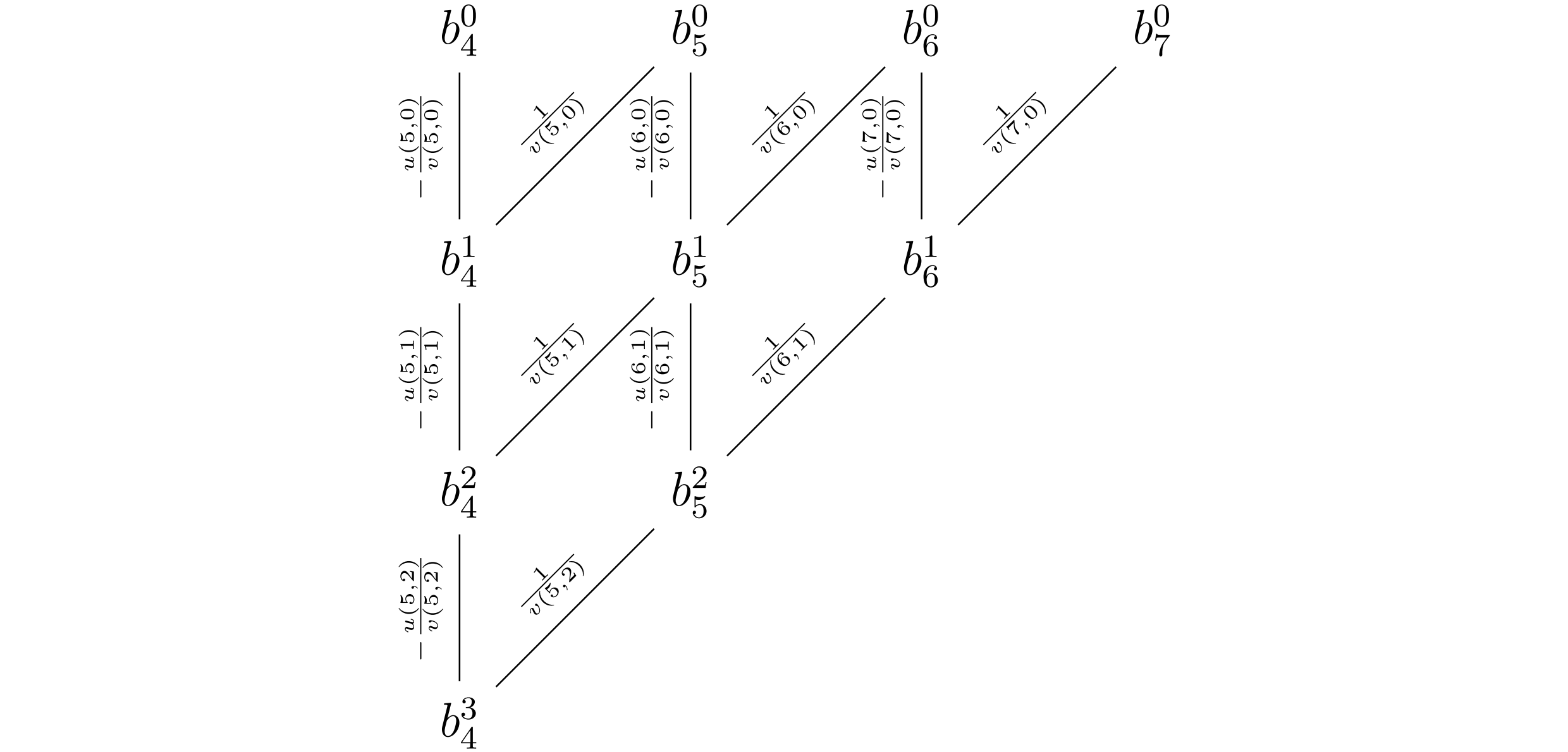}	
		\caption{\centering Root network $b_j^i$ weighted by $\frac{-u(j+1,i-1)}{v(j+1,i-1)}$ and $\frac{1}{v(j+1,i-1)}$, ($j=3,\ i=4$).}
		\label{fig2}
	\end{figure} 
\end{proof}
We have following results.
\begin{theorem}\label{theo23}
	If $a_n^k=u_n\ a_n^{k-1}+v_n\ a_{n+1}^{k-1}$ then 
	\begin{displaymath}
	C_n(k+1,\ell +1)=v_{n+\ell}\ C_n(k,\ell) +\ u_{n+\ell +1}\ C_n(k,\ell+1).
	\end{displaymath}
\end{theorem}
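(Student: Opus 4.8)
The plan is to argue directly in the weighted network of Theorem~\ref{theo21}, exploiting the feature peculiar to the present hypothesis. Recall that $C_n(k,\ell)$ is the total weight of all descending paths that start at the vertex $a_n^k$ (column $n$, row $k$), reach the initial row at $a_{n+\ell}^0$, and at each of the $k$ stages pass from some $a_i^j$ either to $a_i^{j-1}$ (a $u$-move, weight $u(i,j)$) or to $a_{i+1}^{j-1}$ (a $v$-move, weight $v(i,j)$); such a path uses exactly $\ell$ many $v$-moves and $k-\ell$ many $u$-moves. Under the hypothesis $a_n^k=u_n\,a_n^{k-1}+v_n\,a_{n+1}^{k-1}$ we have $u(i,j)=u_i$ and $v(i,j)=v_i$, so the weight of every move depends only on the column $i$ at which it is made and not on the row $j$. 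The consequence I shall use repeatedly is that the weight of a path is invariant under a rigid vertical translation: lowering every vertex of a path by one row leaves its column trajectory, hence its weight, unchanged.

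First I would enumerate the paths counted by $C_n(k+1,\ell+1)$ according to their final move, the one arriving at the initial-row vertex $a_{n+\ell+1}^0$. Since the terminal column is $n+\ell+1$, this move is either a $u$-move made at column $n+\ell+1$, namely from $a_{n+\ell+1}^1$ to $a_{n+\ell+1}^0$ with weight $u_{n+\ell+1}$, or a $v$-move made at column $n+\ell$, namely from $a_{n+\ell}^1$ to $a_{n+\ell+1}^0$ with weight $v_{n+\ell}$. In either case the weight of the final move factors out of the weight of the whole path.

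Next I would identify the truncated paths. Deleting a final $u$-move leaves a path from $a_n^{k+1}$ to $a_{n+\ell+1}^1$ still carrying all $\ell+1$ of the $v$-moves; deleting a final $v$-move leaves a path from $a_n^{k+1}$ to $a_{n+\ell}^1$ carrying $\ell$ of them. Translating each truncated path downward by one row—permissible precisely because the weights are row-independent—turns the former into a path from $a_n^k$ to $a_{n+\ell+1}^0$ with $\ell+1$ $v$-moves and the latter into one from $a_n^k$ to $a_{n+\ell}^0$ with $\ell$ $v$-moves, with weights preserved; these are exactly the families summed by $C_n(k,\ell+1)$ and $C_n(k,\ell)$. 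Summing the two cases then gives
\begin{displaymath}
	C_n(k+1,\ell+1)=v_{n+\ell}\,C_n(k,\ell)+u_{n+\ell+1}\,C_n(k,\ell+1),
\end{displaymath}
which is the assertion; the degenerate values ($\ell=k$, where the empty family $C_n(k,\ell+1)$ contributes $0$, and $\ell=0$) are absorbed correctly.

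The step I expect to be the crux, and the one worth isolating, is the vertical-translation invariance of path weights, for it is exactly what confines the recursion to the single column matrix $C_n$. For the general coefficients $u(n,k),v(n,k)$ of Theorem~\ref{theo21} a downward shift would alter every weight, and no three-term recurrence internal to a fixed $n$ can be expected; the reduction to $u_n,v_n$ is what makes the last-move decomposition close up. If one prefers to avoid the geometric language, the same identity can be extracted by substituting \eqref{equa18} into \eqref{equa19} and comparing the coefficients of $a_{n+\ell}^0$, but that computation most naturally produces the column recurrence $C_n(k+1,\ell)=u_n\,C_n(k,\ell)+v_n\,C_{n+1}(k,\ell-1)$, which mixes the columns $n$ and $n+1$ and is therefore less direct for the present statement.
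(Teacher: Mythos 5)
Your proof is correct and is essentially the paper's argument in combinatorial dress: the paper splits the subset sum \eqref{equa190} for $C_n(k+1,\ell+1)$ according to whether $1\in A$ and then re-indexes $A$ inside $[k]$, which is exactly your decomposition by the final move together with the one-row downward translation that the column-only dependence of $u_i, v_i$ permits. The two proofs differ only in phrasing (paths versus subsets), not in substance.
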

\begin{proof}
	Since there is just one variable $n,$ it suffices to remove the second variable $k$ in equation \eqref{equa190}. We get
		\begin{displaymath}
		\begin{aligned}
			C_n(k+1,\ell +1)&=\sum_{\substack{A\subset[k+1]\\ \lvert A\rvert=k-\ell}}\ \prod_{\substack{0 \le i \le k-\ell -1\\ h_i\in A\\ h_i > h_{i+1}}} u_{n+k+1-h_i-i}\ \prod_{\substack{0 \le j \le \ell}} v_{n+j},\\
			&=\sum_{\substack{A\subset[k]\\ \lvert A\rvert=k-\ell}}\ \prod_{\substack{0 \le i \le k-\ell -1\\ h_i\in A\\ h_i > h_{i+1}}} u_{n+k-h_i-i}\ \prod_{\substack{0 \le j \le \ell}} v_{n+j}\ \ +\\
			&\ \qquad u_{n+\ell +1}\ \sum_{\substack{A\subset[k]\\ \lvert A\rvert=k-\ell-1}}\ \prod_{\substack{0 \le i \le k-\ell -2\\ h_i\in\  A\\ h_i > h_{i+1}}} u_{n+k-h_i-i}\ \prod_{\substack{0 \le j \le \ell}} v_{n+j},\\
			&=v_{n+\ell}\ C_n(k,\ell)+u_{n+\ell +1}\ C_n(k,\ell +1).
		\end{aligned}
	\end{displaymath}
\end{proof}
\begin{corollary}
		If $a_n^k=u_k\ a_n^{k-1}+v_k\ a_{n+1}^{k-1}$ then 
		\begin{displaymath}
			C_n(k+1,\ell +1)=v_{k+1}\ C_n(k,\ell) +\ u_{k+1}\ C_n(k,\ell+1).
		\end{displaymath}
\end{corollary}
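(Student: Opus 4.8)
The plan is to specialize the general weight formula \eqref{equa190} to the case where $u$ and $v$ depend only on the second index, and then exploit the resulting product structure directly. Setting $u(n,k)=u_k$ and $v(n,k)=v_k$ in \eqref{equa190}, the first argument of every factor becomes irrelevant, so that $u(n+k-h_i-i,h_i)=u_{h_i}$ and $v(n+j,m_j)=v_{m_j}$. Since each factor now depends only on the value $h_i$ (respectively $m_j$) and not on its position in the decreasing listing, the two products collapse to products over the underlying sets, giving
\begin{displaymath}
  C_n(k,\ell)=\sum_{\substack{A\subset[k]\\ \lvert A\rvert=k-\ell}}\ \prod_{h\in A} u_h\ \prod_{m\in[k]\setminus A} v_m,
\end{displaymath}
which in particular is independent of $n$.

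The second step is a Pascal-type splitting. I would write $C_n(k+1,\ell+1)$ using the formula above, so that the outer sum runs over subsets $A\subset[k+1]$ with $\lvert A\rvert=k-\ell$, and then partition these subsets according to whether the largest element $k+1$ lies in $A$. If $k+1\notin A$, then $A\subset[k]$ with $\lvert A\rvert=k-\ell$ and the factor $v_{k+1}$ is forced into the $v$-product; summing over such $A$ reproduces $v_{k+1}\,C_n(k,\ell)$. If $k+1\in A$, I would reindex by $A'=A\setminus\{k+1\}\subset[k]$, which has cardinality $k-\ell-1=k-(\ell+1)$ and forces the factor $u_{k+1}$ into the $u$-product; summing over such $A$ reproduces $u_{k+1}\,C_n(k,\ell+1)$. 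Adding the two contributions yields
\begin{displaymath}
  C_n(k+1,\ell+1)=v_{k+1}\,C_n(k,\ell)+u_{k+1}\,C_n(k,\ell+1),
\end{displaymath}
as claimed. Alternatively, one may mimic the computation in the proof of Theorem \ref{theo23} verbatim, replacing the subscripts $n+k-h_i-i$ and $n+j$ appearing there by the constant indices $h_i$ and $m_j$ coming from the second variable.

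The only point requiring care — and the main, though modest, obstacle — is the cardinality bookkeeping: one must track that $C_n(k,\ell)$ corresponds to $\lvert A\rvert=k-\ell$, so that removing the forced element $k+1$ shifts the target from $C_n(k+1,\ell+1)$ to $C_n(k,\ell)$ in the first case and to $C_n(k,\ell+1)$ in the second. Everything else is the observation that the weights factor over the sets $A$ and $[k]\setminus A$ once the position-dependence has been stripped away, which is precisely what makes the decreasing-order conditions $h_i>h_{i+1}$ and $m_j>m_{j+1}$ cosmetic in this specialization.
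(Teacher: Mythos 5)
Your proof is correct and follows essentially the same route as the paper, which restates this corollary verbatim as Theorem~\ref{theo0023} and proves it by specializing \eqref{equa190} so that $C_n(k+1,\ell+1)=\sum_{A\subset[k+1],\,\lvert A\rvert=k-\ell}\prod_{i\in A}u_i\prod_{j\in[k+1]\setminus A}v_j$ and then splitting the sum according to whether $k+1\in A$. Your added remarks on why the decreasing-order conditions become cosmetic and on the cardinality bookkeeping are accurate but do not change the argument.
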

\begin{theorem}\label{theo023}
	If $a_n^k=u_k\ a_n^{k-1}+v_n\ a_{n+1}^{k-1}$ then 
	\begin{displaymath}
		C_n(k+1,\ell +1)=v_{n+\ell}\ C_n(k,\ell) +\ u_{k +1}\ C_n(k,\ell+1).
	\end{displaymath}
\end{theorem}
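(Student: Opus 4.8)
The plan is to specialize the general weight formula \eqref{equa190} to the present case $u(x,y)=u_y$ and $v(x,y)=v_x$, exactly in the spirit of the proof of Theorem \ref{theo23}, and then to reorganize the resulting subset sum. Substituting these forms into \eqref{equa190}, the factor $u(n+k-h_i-i,h_i)$ collapses to $u_{h_i}$ and the factor $v(n+j,m_j)$ collapses to $v_{n+j}$. Two simplifications then occur: the $v$-product depends only on the running index $j$ and not on the actual complement elements $m_j$, so it factors out of the sum as $\prod_{j=0}^{\ell-1} v_{n+j}$; and the $u$-product reduces to $\prod_{a\in A} u_a$, which depends only on the set $A$ and not on the order in which its elements are listed. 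Hence
\begin{displaymath}
 C_n(k,\ell)=\Bigl(\prod_{j=0}^{\ell-1} v_{n+j}\Bigr)\sum_{\substack{A\subset[k]\\ \lvert A\rvert=k-\ell}}\ \prod_{a\in A} u_a .
\end{displaymath}

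Next I would write the analogous expression for $C_n(k+1,\ell+1)$, where now $A$ ranges over subsets of $[k+1]$ with $\lvert A\rvert=k-\ell$ and the complement has size $\ell+1$, giving the prefactor $\prod_{j=0}^{\ell} v_{n+j}$. The key step is to split the subset sum according to whether $k+1\in A$. The subsets with $k+1\notin A$ are exactly the subsets of $[k]$ of size $k-\ell$, and their contribution, together with the prefactor, is $v_{n+\ell}\,C_n(k,\ell)$ after peeling the factor $v_{n+\ell}$ off $\prod_{j=0}^{\ell} v_{n+j}$. The subsets with $k+1\in A$ are of the form $\{k+1\}\cup A'$ with $A'\subset[k]$ and $\lvert A'\rvert=k-\ell-1$; pulling out the common factor $u_{k+1}$ and recognizing the remaining sum together with the prefactor $\prod_{j=0}^{\ell} v_{n+j}$ as the expression defining $C_n(k,\ell+1)$ yields the term $u_{k+1}\,C_n(k,\ell+1)$. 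Adding the two pieces gives the claimed identity.

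The main thing to handle with care is the index bookkeeping when passing from the products of \eqref{equa190} (whose factors are listed by decreasing elements via the conditions $h_i>h_{i+1}$ and $m_j>m_{j+1}$) to plain products over sets. The essential gain over Theorem \ref{theo23} is that here the weight $u_{h_i}$ no longer carries the position-dependent shift $n+k-h_i-i$, so the subset splitting is a direct application of the Pascal-type rule $\sum_{A\subset[k+1]}=\sum_{A\subset[k]}+\sum_{k+1\in A}$ with no reindexing of the surviving factors. As a consistency check, the displayed formula for $C_n(k,\ell)$ equals $\bigl(\prod_{j=0}^{\ell-1} v_{n+j}\bigr)\,e_{k-\ell}(u_1,\dots,u_k)$ with $e_{k-\ell}$ the elementary symmetric polynomial, and the asserted recurrence is then precisely the standard relation $e_{k-\ell}(u_1,\dots,u_{k+1})=e_{k-\ell}(u_1,\dots,u_k)+u_{k+1}\,e_{k-\ell-1}(u_1,\dots,u_k)$ multiplied through by the appropriate $v$-prefactor.
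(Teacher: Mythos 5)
Your proposal is correct and follows essentially the same route as the paper: specialize \eqref{equa190} so that the $u$-product becomes $\prod_{i\in A}u_i$ and the $v$-product becomes $\prod_{j=0}^{\ell-1}v_{n+j}$, then split the subsets of $[k+1]$ according to whether $k+1\in A$ and identify the two resulting pieces as $v_{n+\ell}\,C_n(k,\ell)$ and $u_{k+1}\,C_n(k,\ell+1)$. The reformulation via elementary symmetric polynomials is a nice consistency check but does not change the argument.
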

\begin{proof}
	Remove the first variable $n$ in $u$ and the second variable $k$ in $v$. We get
	\begin{displaymath}
		\begin{aligned}
			C_n(k+1,\ell +1)&=\sum_{\substack{A\subset[k+1]\\ \lvert A\rvert=k-\ell}}\ \prod_{\substack{i\in A}} u_{i}\prod_{\substack{0 \le j \le \ell}} v_{n+j},\\
			&=\sum_{\substack{A\subset[k]\\ \lvert A\rvert=k-\ell}}\ \prod_{\substack{i\in A}} u_{i}\ \prod_{\substack{0 \le j \le \ell}} v_{n+j}\ \ +\\
			&\ \qquad u_{k+1}\ \sum_{\substack{A\subset[k]\\ \lvert A\rvert=k-\ell-1}}\ \prod_{\substack{i\in A}} u_{i}\ \prod_{\substack{0 \le j \le \ell}} v_{n+j},\\
			&=v_{n+\ell}\ C_n(k,\ell)+u_{k+1}\ C_n(k,\ell +1).
		\end{aligned}
	\end{displaymath}
\end{proof}
\begin{theorem}\label{theo0023}
	If $a_n^k=u_k\ a_n^{k-1}+v_k\ a_{n+1}^{k-1}$ then 
	\begin{equation}
		C_n(k+1,\ell +1)=v_{k+1}\ C_n(k,\ell) +\ u_{k+1}\ C_n(k,\ell+1).\label{equa0023}
	\end{equation}
\end{theorem}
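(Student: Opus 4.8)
The plan is to specialize the general coefficient formula \eqref{equa190} to the case $u(n,k)=u_k$ and $v(n,k)=v_k$, exactly in the spirit of the proofs of Theorems \ref{theo23} and \ref{theo023}. Since both weights now depend only on the second (row) argument, the factor attached to an index $h_i\in A$ collapses to $u_{h_i}$ and the factor attached to $m_j\in[k]\setminus A$ collapses to $v_{m_j}$, with no dependence on the positional indices $i,j$ nor on $n$. The ordering constraints $h_i>h_{i+1}$ and $m_j>m_{j+1}$ then merely enumerate the elements of $A$ and of its complement in decreasing order, so both products become products over set membership. I would therefore first record the compact form
\begin{displaymath}
	C_n(k,\ell)=\sum_{\substack{A\subset[k]\\ \lvert A\rvert=k-\ell}}\ \prod_{h\in A} u_h\ \prod_{m\in[k]\setminus A} v_m.
\end{displaymath}

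Next I would write $C_n(k+1,\ell+1)$ using this form, where now $A$ ranges over subsets of $[k+1]$ with $\lvert A\rvert=k-\ell$, so that $[k+1]\setminus A$ has $\ell+1$ elements. The key step is to partition these subsets according to whether the largest available index $k+1$ lies in $A$ or in its complement. If $k+1\notin A$, then $A\subset[k]$ with $\lvert A\rvert=k-\ell$ and the complement factor produces an extra $v_{k+1}$, so summing yields $v_{k+1}\,C_n(k,\ell)$. If $k+1\in A$, then $A\setminus\{k+1\}\subset[k]$ has cardinality $k-\ell-1=k-(\ell+1)$, the product over $A$ produces an extra $u_{k+1}$, and $[k+1]\setminus A=[k]\setminus(A\setminus\{k+1\})$, so summing yields $u_{k+1}\,C_n(k,\ell+1)$. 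Adding the two cases gives \eqref{equa0023}.

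I do not expect any genuine obstacle here: the entire content is the bijection between subsets of $[k+1]$ of size $k-\ell$ and the disjoint union of the subsets of $[k]$ of size $k-\ell$ with the subsets of $[k]$ of size $k-\ell-1$, obtained according as the top index $k+1$ is omitted from or deleted out of $A$. The only point requiring care is the cardinality bookkeeping: one must verify that removing $k+1$ from $A$ lowers $\lvert A\rvert$ to exactly $k-(\ell+1)$, so that the second summand is genuinely $C_n(k,\ell+1)$. Because both weights are now position-independent, this argument is even cleaner than in Theorem \ref{theo23}, where the factors $u_{n+k-h_i-i}$ still depend on the position $i$; here that complication disappears entirely.
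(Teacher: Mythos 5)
Your proposal is correct and follows essentially the same route as the paper: specialize formula \eqref{equa190} to the position-independent weights $u_k$ and $v_k$ so that $C_n(k,\ell)$ becomes a sum over subsets $A\subset[k]$ of $\prod_{i\in A}u_i\,\prod_{j\in[k]\setminus A}v_j$, then split the subsets of $[k+1]$ according to whether $k+1$ lies in $A$ or in its complement. The cardinality bookkeeping you flag ($\lvert A\setminus\{k+1\}\rvert=k-(\ell+1)$) is exactly what the paper's displayed computation records, so there is nothing to add.
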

\begin{proof}
	(By similarity to the proof of Theorem \ref{theo23})\\
	Remove the first variable $n$ in $u$ and $v$. We get
	\begin{displaymath}
		\begin{aligned}
			C_n(k+1,\ell +1)&=\sum_{\substack{A\subset[k+1]\\ \lvert A\rvert=k-\ell}}\ \prod_{\substack{i\in A}} u_{i}\ \prod_{\substack{j\in [k+1]\setminus A}} v_{j},\\
			&=\sum_{\substack{A\subset[k]\\ \lvert A\rvert=k-\ell}}\ \prod_{\substack{i\in A}} u_{i}\prod_{\substack{j\in [k+1]\setminus A}} v_{j}\ \ +\\
			&\ \qquad u_{k+1}\ \sum_{\substack{A\subset[k]\\ \lvert A\rvert=k-\ell-1}}\ \prod_{\substack{i\in A}} u_{i}\ \prod_{\substack{j\in [k]\setminus A}} v_{j},\\
			&=v_{k+1}\ C_n(k,\ell)+u_{k+1}\ C_n(k,\ell +1).
		\end{aligned}
	\end{displaymath}
\end{proof}
\begin{theorem}\label{theo00023}
	If $a_n^k=u(n,k)\ a_n^{k-1}+v_k\ a_{n+1}^{k-1}$ then 
	\begin{equation}
		C_n(k+1,\ell +1)=v_{k+1}\ C_n(k,\ell) +\ u(n,k+1)\ C_n(k,\ell+1).\label{equa00023}
	\end{equation}
\end{theorem}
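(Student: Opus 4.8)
The plan is to follow the pattern of the proof of Theorem~\ref{theo23}, arguing directly from the closed form \eqref{equa190} and peeling off the largest element of the ground set $[k+1]$. First I would write $C_n(k+1,\ell+1)$ from \eqref{equa190} under the present hypothesis. Because $v$ depends only on its second argument, every factor $v(n+j,m_j)$ collapses to $v_{m_j}$, so the second product becomes the order-insensitive product $\prod_{m\in[k+1]\setminus A} v_m$; the first product retains the general shape $\prod_{h_i\in A} u(n+k+1-h_i-i,h_i)$ since $u$ still depends on both arguments. I would then split the sum over $A$ according to whether the maximal index $k+1$ lies in $A$ or in its complement.

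When $k+1\in A$ it is necessarily $h_0$, occupying position $i=0$, and its $u$-factor is $u\bigl(n+(k+1)-(k+1)-0,\,k+1\bigr)=u(n,k+1)$, which pulls out of the sum. Deleting $k+1$ from $A$ and reindexing the remaining indices by $i\mapsto i-1$ sends $n+(k+1)-h_i-i$ to $n+k-h'_{i'}-i'$, so the residual first product is exactly the $u$-product of $C_n(k,\ell+1)$, while $[k+1]\setminus A=[k]\setminus A'$ makes the $v$-product agree as well. This case therefore contributes $u(n,k+1)\,C_n(k,\ell+1)$, which is the second term of \eqref{equa00023}.

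The case $k+1\notin A$ is where I expect the genuine difficulty. Now $k+1$ is the largest element of the complement, hence $m_0$, and it contributes the factor $v_{k+1}$, which pulls out and supplies the coefficient $v_{k+1}$ in the statement. What remains is a sum over $A\subseteq[k]$ with $|A|=k-\ell$ of $\prod_{h_i\in A} u(n+k+1-h_i-i,h_i)$ times $\prod_{m\in[k]\setminus A} v_m$, which is almost $C_n(k,\ell)$ --- except that each $u$-factor carries first argument $n+k+1-h_i-i$ instead of the $n+k-h_i-i$ prescribed by \eqref{equa190}. Combinatorially this extra $+1$ records the north-east step taken at row $k+1$, which sits above every surviving $u$-step and so shifts every column index by one.

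Absorbing this uniform shift is the step I would scrutinize most carefully. It cancels immediately when $u$ is free of its first argument --- the situation of Theorem~\ref{theo0023} --- but not in general. The honest bookkeeping of the shift is the first-step identity $C_n(k+1,\ell+1)=u(n,k+1)\,C_n(k,\ell+1)+v_{k+1}\,C_{n+1}(k,\ell)$, obtained by conditioning on the first step out of $a_n^{k+1}$; here the $+1$ is precisely the passage from column $n$ to column $n+1$. I would therefore either supply the hypothesis that forces $C_{n+1}(k,\ell)=C_n(k,\ell)$, or read \eqref{equa00023} with $C_{n+1}(k,\ell)$ in its first term, since that is the form the argument actually delivers.
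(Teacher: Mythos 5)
Your decomposition is exactly the one the paper uses: expand $C_n(k+1,\ell+1)$ from \eqref{equa190} and split the sum over $A\subset[k+1]$ according to whether $k+1\in A$. The case $k+1\in A$ is handled identically in both arguments: $k+1$ must be $h_0$, its factor is $u(n,k+1)$, and the re-indexing $i\mapsto i-1$ turns $n+k+1-h_i-i$ into $n+k-h'_{i'}-i'$, giving $u(n,k+1)\,C_n(k,\ell+1)$. The divergence is in the case $k+1\notin A$, and your suspicion there is well founded. The surviving $u$-factors carry first argument $n+k+1-h_i-i$, whereas $C_n(k,\ell)$ requires $n+k-h_i-i$; the sum you are left with is, by definition, $v_{k+1}\,C_{n+1}(k,\ell)$, not $v_{k+1}\,C_n(k,\ell)$. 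The paper's own proof passes from $u_{n+k+1-h_i-i,h_i}$ to $u_{n+k-h_i-i,h_i}$ in that term with no justification --- precisely the absorption of the shift that you decline to perform. Since $u$ genuinely depends on its first argument under the hypothesis of Theorem~\ref{theo00023}, the shift cannot be absorbed, and \eqref{equa00023} as printed fails in general. A minimal check: for $k=1$, $\ell=0$ the two paths from $a_n^2$ to $a_{n+1}^0$ give
\begin{displaymath}
C_n(2,1)=u(n,2)\,v_1+v_2\,u(n+1,1),
\end{displaymath}
while the right-hand side of \eqref{equa00023} evaluates to $v_2\,u(n,1)+u(n,2)\,v_1$; these disagree whenever $u(n,1)\neq u(n+1,1)$.

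So your proposal does not prove the stated theorem, but that is because the stated theorem is not correct as written; what your argument (equivalently, conditioning on the first step out of $a_n^{k+1}$) actually establishes is
\begin{displaymath}
C_n(k+1,\ell+1)=v_{k+1}\,C_{n+1}(k,\ell)+u(n,k+1)\,C_n(k,\ell+1),
\end{displaymath}
which collapses to \eqref{equa00023} exactly when $C_{n+1}(k,\ell)=C_n(k,\ell)$, i.e.\ when $u$ is free of its first argument (the situation of Theorem~\ref{theo0023}; compare also how Theorem~\ref{theo23} compensates for the analogous shift by writing $v_{n+\ell}$ and $u_{n+\ell+1}$ rather than $n$-independent coefficients). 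Your final paragraph identifies the right fix: either state and prove the recurrence with $C_{n+1}(k,\ell)$ in the first term, or add the hypothesis that $u$ does not depend on $n$.
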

\begin{proof}
	(By similarity to the proof of Theorem \ref{theo23})\\
	Remove the first variable $n$ in $v$. We get
	\begin{displaymath}
		\begin{aligned}
			C_n(k+1,\ell +1)&=\sum_{\substack{A\subset[k+1]\\ \lvert A\rvert=k-\ell}}\ \prod_{\substack{0 \le i \le k-\ell -1\\ h_i\in A\\ h_i > h_{i+1}}} u_{n+k+1-h_i-i,h_i}\prod_{\substack{j\in [k+1]\setminus A}} v_{j},\\
			&=\sum_{\substack{A\subset[k]\\ \lvert A\rvert=k-\ell}}\ \prod_{\substack{0 \le i \le k-\ell -1\\ h_i\in A\\ h_i > h_{i+1}}} u_{n+k-h_i-i,h_i}\ \prod_{\substack{j\in [k+1]\setminus A}} v_{j}\ \ +\\
			&\ \qquad u_{n,k+1}\ \sum_{\substack{A\subset[k]\\ \lvert A\rvert=k-\ell-1}}\ \prod_{\substack{0 \le i \le k-\ell -2\\ h_i\in\  A\\ h_i > h_{i+1}}} u_{n+k-h_i-i,h_i}\ \prod_{\substack{j\in [k]\setminus A}} v_{j},\\
			&=v_{k+1}\ C_n(k,\ell)+u_{n,k+1}\ C_n(k,\ell +1).
		\end{aligned}
	\end{displaymath}
\end{proof}
\begin{corollary}\label{cor1}
	There exists a unique matrix $R$ such that $\Bigl(a_0^k\Bigr)_{k\geq 0}=R\ \Bigl(a_{k}^0\Bigr)_{k\geq 0}$ with $R=\Bigl(r_{k,\ell}\Bigr)_{k,\ell\geq 0}=C_0.$
\end{corollary}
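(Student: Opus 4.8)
The plan is to read off existence directly from Theorem~\ref{theo21} and then pin down uniqueness from the lower-triangular shape of $C_0$ together with the linearity of the construction \eqref{equa18}. No genuinely new computation is needed; the corollary is a specialization of \eqref{equa19} plus a uniqueness argument.

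First, for existence I would specialize \eqref{equa19} to $n=0$. Since $a_{0+\ell}^0=a_\ell^0$, this gives $a_0^k=\sum_{\ell=0}^{k} C_0(k,\ell)\,a_\ell^0$ for every $k\geq 0$, which is exactly the $k$-th coordinate of the matrix identity $\bigl(a_0^k\bigr)_{k\geq 0}=C_0\,\bigl(a_k^0\bigr)_{k\geq 0}$ with $R=C_0=\bigl(C_0(k,\ell)\bigr)_{k,\ell\geq 0}$. I would stress that each such sum is finite: the index $\ell$ in \eqref{equa19} runs only up to $k$, so by \eqref{equa190} we have $C_0(k,\ell)=0$ whenever $\ell>k$ and $C_0$ is lower triangular; hence the product $C_0\,\bigl(a_k^0\bigr)$ is well defined row by row, with no convergence hypothesis on the sequences.

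For uniqueness I would use that the recursion \eqref{equa18} has coefficients $u(n,k),v(n,k)$ that do not depend on the chosen initial data, so the assignment $(a_n^0)_{n\geq0}\mapsto(a_0^k)_{k\geq0}$ is a well-defined linear map, realized by $C_0$ for every initial sequence. Suppose $R'=(r'_{k,\ell})$ is any matrix with $(a_0^k)=R'(a_k^0)$ for all initial sequences. Fixing $j\geq 0$ and testing with the basis sequence $a_n^0=\delta_{n,j}$, the formula above yields $a_0^k=C_0(k,j)$; applying $R'$ to the same sequence therefore forces the $j$-th column of $R'$ to coincide with the $j$-th column of $C_0$. As $j$ is arbitrary, $R'=C_0$, which gives both uniqueness and the identification $R=C_0$.

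The only delicate point is conceptual rather than computational: one must declare in which sense uniqueness is meant. A single vector equation cannot determine an infinite matrix, so the statement has to be read as ``$R$ works for every initial sequence'', and it is precisely the lower-triangularity of $C_0$ (finite rows) that makes the probing by the basis sequences $\delta_{n,j}$ legitimate. Once this is fixed, no serious obstacle remains.
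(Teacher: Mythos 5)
Your proposal is correct and follows the same route the paper intends: the corollary is stated without proof as an immediate consequence of Theorem~\ref{theo21}, and your existence argument is exactly the specialization of \eqref{equa19} to $n=0$, using the lower-triangularity of $C_0$ to make the row-by-row product well defined. Your uniqueness argument (probing with the basis sequences $\delta_{n,j}$, which is legitimate precisely because the recursion \eqref{equa18} with fixed $u,v$ defines a linear map on initial sequences) supplies a detail the paper leaves implicit, and your observation that uniqueness only makes sense when $R$ is required to work for every initial sequence is a correct and worthwhile clarification rather than a deviation.
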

We denote by $a(t)=\displaystyle\sum_{n\geq 0}\ a_n^0\ t^n,\ A(t)=\displaystyle\sum_{n\geq 0}\ a_n^0\ \frac{t^n}{n!},\ \Bar{a}(t)=\displaystyle\sum_{n\geq 0}\ a_0^n\ t^n$ and $\Bar{A}(t)=\displaystyle\sum_{n\geq 0}\ a_0^n\ \frac{t^n}{n!}.$\\
$R$ is a Riordan matrix if the generating functions of the initial and final sequence are well-defined. These generating functions are related by the following formulas. We say that the Riodan matrix associated to the column 0 of $M(u,v,a).$
\begin{remark}
	If the coefficients $u$ and $v$ do not depend on $n,\ R=C_0=C_n;$ where $R$ is the Riodan matrix associated to each column $n$ of $M(u,v,a).$
\end{remark}
\begin{theorem}\label{theo230}
	The Riordan matrix associated to the column 0 of ${}^t M(u,v,a)$ is the inverse matrix $R^{-1}$ of $R,$ where ${}^t M(u,v,a)$ is the transpose matrix of $M(u,v,a)$ and $R$ is the Riodan matrix associated to the column 0 of $M(u,v,a).$ That is
	\begin{itemize}
		\item if $\displaystyle\sum_{\ell\geq 0}\ r_{k,\ell}\ t^{\ell}=w_{\ell}\ g(t)\ \Bigl(f(t)\Bigr)^{\ell},\ w_{\ell}=1$ or $w_{\ell}=\frac{1}{\ell!},$
		\begin{equation}
			\substack{\dfrac{1}{g\bigl(\bar{f}(t)\bigr)}}\ \Bar{a}\bigl(\bar{f}(t)\bigr)=\begin{cases}
				a(t), &\text{for the case of }\ w_{\ell}=1;\\
				A(t), &\text{for the case of }\ w_{\ell}=\frac{1}{\ell!};
			\end{cases}\label{equ0240}
		\end{equation}
		\item if $\displaystyle\sum_{\ell\geq 0}\ r_{k,\ell}\ \frac{t^{\ell}}{\ell!}=w_{\ell}\ G(t)\ \frac{\Bigl(F(t)\Bigr)^{\ell}}{\ell!},\ w_{\ell}=1$ or $w_{\ell}=\ell!,$
		\begin{equation}
			\substack{\dfrac{1}{G\bigl(\bar{F}(t)\bigr)}}\ \Bar{A}\bigl(\bar{F}(t)\bigr)=\begin{cases}
				a(t), &\text{for the case of }\ w_{\ell}=1;\\
				A(t), &\text{for the case of }\ w_{\ell}=\ell!.
			\end{cases}\label{equ0241}
		\end{equation}
	\end{itemize}
\end{theorem}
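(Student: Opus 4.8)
The plan is to prove the statement in two stages: first to identify the Riordan matrix associated to the column $0$ of the transpose with $R^{-1}$ by a short linear-algebraic argument, and then to convert this matrix identity into the four generating-function formulas using Theorem \ref{theo4}, its exponential counterpart \eqref{equ08}, and the inversion formula \eqref{equa14}.

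First I would observe that ${}^tM(u,v,a)$ is itself an Euler--Seidel matrix of the form \eqref{equa18}. Indeed, writing $b_k^n=a_n^k$ exactly as in the proof of Theorem \ref{theo22}, the defining recurrence \eqref{equa18} rearranges into
\begin{displaymath}
	b_k^n=\frac{-u(k+1,n-1)}{v(k+1,n-1)}\ b_k^{n-1}+\frac{1}{v(k+1,n-1)}\ b_{k+1}^{n-1},
\end{displaymath}
which is precisely \eqref{equa18} for the matrix $M(u',v',b)$ with $u'(k,n)=-u(k+1,n-1)/v(k+1,n-1)$ and $v'(k,n)=1/v(k+1,n-1)$ (this uses $v\neq 0$, which is in any case forced by $f_1\neq 0$). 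Consequently Corollary \ref{cor1} applies to ${}^tM(u,v,a)$ and yields its associated matrix $R'=C_0'$.

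Next I would compare the two instances of Corollary \ref{cor1}. For $M(u,v,a)$ it gives $\bigl(a_0^k\bigr)_{k\geq 0}=R\ \bigl(a_k^0\bigr)_{k\geq 0}$. For the transpose the initial sequence is $\bigl(b_n^0\bigr)=\bigl(a_0^n\bigr)$ and the final sequence is $\bigl(b_0^k\bigr)=\bigl(a_k^0\bigr)$, so Corollary \ref{cor1} gives $\bigl(a_k^0\bigr)_{k\geq 0}=R'\ \bigl(a_0^k\bigr)_{k\geq 0}$. Substituting the first relation into the second yields $\bigl(a_k^0\bigr)=R'R\ \bigl(a_k^0\bigr)$; since this holds for every choice of initial sequence, we conclude $R'R=I$, that is $R'=R^{-1}$. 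This is the conceptual core of the statement and the step I expect to demand the most care, since it depends on verifying that the transpose genuinely falls under Corollary \ref{cor1} and on keeping the initial/final roles straight under transposition.

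Finally I would read off the generating functions. In the ordinary convention $R=\bigl(g(t),f(t)\bigr)$, corresponding to $w_\ell=1$, Theorem \ref{theo4} applied to $R^{-1}=\bigl(1/g(\bar f(t)),\bar f(t)\bigr)$ from \eqref{equa14}, with input column $\bigl(a_0^k\bigr)$ of ordinary generating function $\bar a(t)$ and output $\bigl(a_k^0\bigr)$ of generating function $a(t)$, gives $\tfrac{1}{g(\bar f(t))}\,\bar a(\bar f(t))=a(t)$, which is the first line of \eqref{equ0240}. The three remaining cases follow in the same manner: the weight $w_\ell$ records whether the columns of $R$ are normalised as $g(t)\bigl(f(t)\bigr)^\ell$ or as $G(t)\bigl(F(t)\bigr)^\ell/\ell!$ through \eqref{equ66}, while the choice between the ordinary pairing \eqref{equ7} and the exponential pairing \eqref{equ08} dictates whether the right-hand side is $a(t)$ or $A(t)$ and whether $\bar f,g$ or $\bar F,G$ appear. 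The only labour remaining is the bookkeeping of these four $\bigl(w_\ell,\text{convention}\bigr)$ combinations, which is routine once the matrix identity $R'=R^{-1}$ has been established.
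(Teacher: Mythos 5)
Your proposal is correct and follows essentially the same route as the paper: both apply Corollary \ref{cor1} to $M(u,v,a)$ and to its transpose (rewritten as an Euler--Seidel matrix as in the proof of Theorem \ref{theo22}) to conclude that the two associated matrices are mutually inverse, and then unwind the four $(w_\ell,\text{convention})$ cases via \eqref{equ7}, \eqref{equ08} and \eqref{equa14}. The only cosmetic difference is that the paper invokes the uniqueness clause of Corollary \ref{cor1} after checking that $R^{-1}$ sends $\bigl(b_k^0\bigr)$ to $\bigl(b_0^k\bigr)$, whereas you compose the two relations and quantify over all initial sequences to obtain $R'R=I$; your explicit verification that the transpose falls under \eqref{equa18} is a welcome detail the paper leaves implicit.
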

\begin{proof}
	Let $\Bigl(b_n^k\Bigr)_{n,k\geq 0}={}^t M(u,v,a)$ be the transpose matrix of $M(u,v,a)$ and $R_1$ be the Riodan matrix associated to the column 0 of ${}^t M(u,v,a).$ We have 
	\begin{displaymath}
		\Bigl(b_0^k\Bigr)_{k\geq 0}=R_1\ \Bigl(b_{k}^0\Bigr)_{k\geq 0}
	\end{displaymath}
	and
	\begin{displaymath}
		\begin{aligned}
			\Bigl(a_0^k\Bigr)_{k\geq 0}&=R\ \Bigl(a_{k}^0\Bigr)_{k\geq 0},\\
			R^{-1}\ \Bigl(a_0^k\Bigr)_{k\geq 0}&=\Bigl(a_{k}^0\Bigr)_{k\geq 0},\\
			R^{-1}\ \Bigl(b_k^0\Bigr)_{k\geq 0}&=\Bigl(b_0^{k}\Bigr)_{k\geq 0}.	
		\end{aligned}
	\end{displaymath}
	From Corollary \ref{cor1}, we get $R_1=R^{-1}.$\\
	For $w_{\ell}=1,\ w_{\ell}=\frac{1}{\ell!}$ or $w_{\ell}=\ell!,$ we have
	\begin{displaymath}
		\begin{aligned}
			\Bigl(a_0^k\Bigr)_{k\geq 0}&=R\ \Bigl(w_k\ a_{k}^0\Bigr)_{k\geq 0},\\
			R^{-1}\ \Bigl(a_0^k\Bigr)_{k\geq 0}&=\Bigl(w_k\ a_{k}^0\Bigr)_{k\geq 0}.
		\end{aligned}
	\end{displaymath}
	\begin{itemize}
		\item If $\displaystyle\sum_{k\geq 0}\ r_{k,\ell}\ t^{k}=w_{\ell}\ g(t)\ \Bigl(f(t)\Bigr)^{\ell},\ w_{\ell}=1$ or $w_{\ell}=\frac{1}{\ell!}$\\
		From equation \eqref{equ7}, we get
		\begin{displaymath}
			\sum_{k\geq 0}\ w_k\ a_{k}^0\ t^k=\substack{\dfrac{1}{g\bigl(\bar{f}(t)\bigr)}}\ \sum_{k\geq 0}\ a_{0}^k\ \Bigl(\bar{f}(t)\Bigr)^{k}.
		\end{displaymath}
		Then we obtain,
		\begin{itemize}
			\item for the case of $w_k=1$,
			\begin{displaymath}
				\begin{aligned}	
					\sum_{k\geq 0}\ a_{k}^0\ t^k&=\substack{\dfrac{1}{g\bigl(\bar{f}(t)\bigr)}}\ \sum_{\ell \geq 0}\ a_{0}^{\ell}\ \Bigl(\bar{f}(t)\Bigr)^{\ell},\\
					a(t)&=\substack{\dfrac{1}{g\bigl(\bar{f}(t)\bigr)}}\ \Bar{a}\bigl(\bar{f}(t)\bigr),
				\end{aligned}
			\end{displaymath}
			where $R^{-1}=\Bigl(\frac{1}{g\bigl(\bar{f}(t)\bigr)},\bar{f}(t)\Bigr);$
			\item for the case of $w_k=\frac{1}{k!}$,
			\begin{displaymath}
				\begin{aligned}	
					\sum_{k\geq 0}\ a_{k}^0\ \frac{t^k}{k!}&=\substack{\dfrac{1}{g\bigl(\bar{f}(t)\bigr)}}\ \sum_{k\geq 0}\ a_{0}^k\ \Bigl(\bar{f}(t)\Bigr)^k,\\
					A(t)&=\substack{\dfrac{1}{g\bigl(\bar{f}(t)\bigr)}}\ \Bar{a}\bigl(\bar{f}(t)\bigr),
				\end{aligned}
			\end{displaymath}
			where $R^{-1}=\Bigl[\frac{1}{g\bigl(\bar{f}(t)\bigr)},\bar{f}(t)\Bigr].$
		\end{itemize}
		\item If $\displaystyle\sum_{k\geq 0}\ r_{k,\ell}\ \frac{t^{k}}{k!}=w_{\ell}\ G(t)\ \frac{\Bigl(F(t)\Bigr)^{\ell}}{\ell!},\ w_{\ell}=1$ or $w_{\ell}=\ell!$\\
		From equation \eqref{equ08}, we get
		\begin{displaymath}
				\sum_{k\geq 0}\ w_k\ a_{k}^0\ \frac{t^k}{k!}=\substack{\dfrac{1}{G\bigl(\bar{F}(t)\bigr)}}\ \sum_{k\geq 0}\ a_{0}^k\ \frac{\Bigl(\bar{F}(t)\Bigr)^{k}}{k!}.
		\end{displaymath}
		Then we obtain,
		\begin{itemize}
			\item for the case of $w_k=1$,
			\begin{displaymath}
				\begin{aligned}	
					\sum_{k\geq 0}\ a_{k}^0\ \frac{t^k}{k!}&=\substack{\dfrac{1}{G\bigl(\bar{F}(t)\bigr)}}\ \sum_{k\geq 0}\ a_{0}^k\ \frac{\Bigl(\bar{F}(t)\Bigr)^{k}}{k!},\\
					A(t)&=\substack{\dfrac{1}{G\bigl(\bar{F}(t)\bigr)}}\ \Bar{A}\bigl(\bar{F}(t)\bigr),
				\end{aligned}
			\end{displaymath}
			where $R^{-1}=\Bigl[\frac{1}{G\bigl(\bar{F}(t)\bigr)},\bar{F}(t)\Bigr];$
			\item for the case of $w_k=k!$,
			\begin{displaymath}
				\begin{aligned}	
					\sum_{k\geq 0}\ a_{k}^0\ t^k&=\substack{\dfrac{1}{G\bigl(\bar{F}(t)\bigr)}}\ \sum_{k\geq 0}\ a_{0}^k\ \frac{\Bigl(\bar{F}(t)\Bigr)^k}{k!},\\
					a(t)&=\substack{\dfrac{1}{G\bigl(\bar{F}(t)\bigr)}}\ \Bar{A}\bigl(\bar{F}(t)\bigr),
				\end{aligned}
			\end{displaymath}
			where $R^{-1}=\Bigl(\frac{1}{G\bigl(\bar{F}(t)\bigr)},\bar{F}(t)\Bigr).$
		\end{itemize}
	\end{itemize}
\end{proof}
\begin{theorem}\label{theo24}
	For given constants $p$ and $q,$ if 
	\begin{displaymath}
		a_n^k=p\ a_n^{k-1}+\frac{q}{n+1}\ a_{n+1}^{k-1}	
	\end{displaymath}
	then 
	\begin{equation}
		\Bar{a}(t)=\frac{1}{1-p\ t}\ A\bigl(\frac{q\ t}{1-p\ t}\bigr).\label{equa24}
	\end{equation}
\end{theorem}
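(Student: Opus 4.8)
The plan is to apply Theorem~\ref{theo21} with $n=0$, compute the entries of the associated matrix $C_0$ explicitly for the coefficients $u(n,k)=p$ and $v(n,k)=\frac{q}{n+1}$, and then sum the resulting generating series.

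First I would specialize the weight formula~\eqref{equa190}. Since $u$ equals the constant $p$, the first product equals $p^{k-\ell}$ for every admissible subset $A$. Since $v(n+j,m_j)=\frac{q}{n+j+1}$ depends only on the position $j$ and not on the element $m_j$, the second product equals
\[
\prod_{j=0}^{\ell-1}\frac{q}{n+j+1}=q^{\ell}\,\frac{n!}{(n+\ell)!},
\]
again independently of $A$. Each of the $\binom{k}{k-\ell}=\binom{k}{\ell}$ subsets therefore contributes the same weight, so
\[
C_n(k,\ell)=\binom{k}{\ell}\,p^{k-\ell}q^{\ell}\,\frac{n!}{(n+\ell)!},
\qquad
r_{k,\ell}=C_0(k,\ell)=\binom{k}{\ell}\,\frac{p^{k-\ell}q^{\ell}}{\ell!}.
\]

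Next, by Corollary~\ref{cor1} the final sequence satisfies $a_0^k=\sum_{\ell=0}^{k}r_{k,\ell}\,a_\ell^0$. I would then form $\bar a(t)=\sum_{k\geq0}a_0^k\,t^k$, interchange the two (formal) summations, and factor out the $\ell$-dependent constants:
\[
\bar a(t)=\sum_{\ell\geq0}\frac{q^{\ell}a_\ell^0}{\ell!}\sum_{k\geq\ell}\binom{k}{\ell}p^{k-\ell}t^{k}.
\]
The inner sum is evaluated by the negative-binomial identity $\sum_{k\geq\ell}\binom{k}{\ell}p^{k-\ell}t^{k}=\dfrac{t^{\ell}}{(1-pt)^{\ell+1}}$, whence
\[
\bar a(t)=\frac{1}{1-pt}\sum_{\ell\geq0}\frac{a_\ell^0}{\ell!}\left(\frac{qt}{1-pt}\right)^{\ell}
=\frac{1}{1-pt}\,A\!\left(\frac{qt}{1-pt}\right),
\]
recognizing the last series as the exponential generating function $A$ evaluated at $\frac{qt}{1-pt}$, which is exactly~\eqref{equa24}.

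The hard part is the explicit evaluation of $C_0(k,\ell)$: one has to verify that \emph{neither} product in~\eqref{equa190} depends on the chosen subset $A$, so that the sum over $A$ collapses to the single binomial coefficient $\binom{k}{\ell}$. After that the argument is routine; the interchange of summations is legitimate because $C_0(k,\ell)=0$ for $\ell>k$, so each power of $t$ receives only finitely many contributions. Alternatively, the same conclusion follows from Theorem~\ref{theo230}: the column generating function $\sum_{k}r_{k,\ell}t^{k}=\frac{1}{1-pt}\,\frac{1}{\ell!}\!\left(\frac{qt}{1-pt}\right)^{\ell}$ exhibits $R$ in the weighted form with $g(t)=\frac{1}{1-pt}$, $f(t)=\frac{qt}{1-pt}$ and $w_\ell=\frac{1}{\ell!}$; substituting the inverse $\bar f(t)=\frac{t}{q+pt}$ into~\eqref{equ0240} gives $A(t)=\frac{q}{q+pt}\,\bar a\!\left(\frac{t}{q+pt}\right)$, and inverting this functional relation reproduces~\eqref{equa24}.
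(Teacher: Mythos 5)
Your proposal is correct and follows essentially the same route as the paper: both reduce to the explicit evaluation $r_{k,\ell}=\binom{k}{\ell}\frac{p^{k-\ell}q^{\ell}}{\ell!}$ from the weight formula and then sum $\sum_{k\geq\ell}\binom{k}{\ell}p^{k-\ell}t^{k}=\frac{t^{\ell}}{(1-pt)^{\ell+1}}$ to identify the Riordan pair $\Bigl(\frac{1}{1-pt},\frac{qt}{1-pt}\Bigr)$ acting on $\Bigl(\frac{1}{k!}a_k^0\Bigr)$. The only cosmetic difference is that the paper packages the final interchange of summations as an application of equation \eqref{equ7}, whereas you carry it out by hand.
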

\begin{proof}
	We have
	\begin{displaymath}
		\begin{aligned}
			\sum_{k\geq \ell}\ r_{k,\ell}\ t^k&=\sum_{k\geq \ell}\ \Bigl(\sum_{\substack{A\subset[k]\\ \lvert A\rvert=k-\ell}}\ p^{k-\ell }\ \prod_{\substack{0 \le j \le \ell -1}} \frac{q}{j+1}\Bigr)\ t^k,\\
			&=\frac{q^{\ell}}{\ell!}\ \sum_{k\geq \ell}\ \binom{k}{\ell}\ p^{k-\ell}\ t^k,\\
			&=\frac{\bigl(q\ t\bigr)^{\ell}}{\ell!}\ \Bigl(\sum_{k\geq 0}\ p^{k}\ t^k\Bigr)^{\ell +1},
		\end{aligned}
	\end{displaymath}
	and so we get
	\begin{displaymath}
		\begin{aligned}
			r_{k,\ell}&=\bigl[t^k\bigr]\ \frac{1}{\ell!\ \bigr(1-p\ t\bigr)}\ \Bigl(\frac{q\ t}{1-p\ t}\Bigr)^{\ell},\\
			\sum_{\ell=0}^{k}\ r_{k,\ell}\ a_{\ell}^0&=\sum_{\ell=0}^{k}\ \bigl[t^k\bigr]\ \frac{1}{\ell!\ \bigr(1-p\ t\bigr)}\ \Bigl(\frac{q\ t}{1-p\ t}\Bigr)^{\ell}\ a_{\ell}^0,\\
			\Bigl( a_0^k\Bigr)_{k\geq 0}&=\Bigl(\frac{1}{1-p\ t}, \frac{q\ t}{1-p\ t}\Bigr)\ \Bigl(\frac{1}{k!}\ a_{k}^0\Bigr)_{k \geq 0}.
		\end{aligned}
	\end{displaymath}
	From equation \eqref{equ7}, we obtain
	\begin{displaymath}
		\begin{aligned}
		\Bar{a}(t)&=\frac{1}{1-p\ t}\ \sum_{k\geq 0}\ \frac{1}{k!}\ a_{k}^0\ \bigl(\frac{q\ t}{1-p\ t}\bigr)^{k},\\
		&=\frac{1}{1-p\ t}\ A\bigl(\frac{q\ t}{1-p\ t}\bigr).
	\end{aligned}
\end{displaymath}	
\end{proof}
\begin{theorem}\label{theo024}
	For given constants $p$ and $q,$ if
	\begin{displaymath}
		a_n^k=p\ k\ a_n^{k-1}+ q\ k\ a_{n+1}^{k-1}
	\end{displaymath}
	then 
	\begin{equation}
		\Bar{A}(t)=\frac{1}{1-p\ t}\ a\bigl(\frac{q\ t}{1-p\ t}\bigr).\label{equa024}
	\end{equation}
\end{theorem}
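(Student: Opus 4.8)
The plan is to mirror the strategy used in Theorem~\ref{theo24}: first pin down the entries $r_{k,\ell}=C_0(k,\ell)$ of the Riordan matrix attached to column $0$ of $M(u,v,a)$, then collapse the column generating function into closed Riordan form, and finally read off $\Bar{A}(t)$ through the fundamental matrix--vector relation. Here the coefficients are $u(n,k)=p\,k$ and $v(n,k)=q\,k$, which depend on $k$ alone, so Theorem~\ref{theo0023} governs the situation and the weight attached to a subset $A\subset[k]$ with $\lvert A\rvert=k-\ell$ simplifies to $\prod_{i\in A}p\,i\,\prod_{j\in[k]\setminus A}q\,j$.

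The key observation I would exploit is that $A$ and $[k]\setminus A$ partition $[k]$, so $\prod_{i\in A}i\cdot\prod_{j\in[k]\setminus A}j=k!$ regardless of which $A$ is chosen; each weight therefore equals $p^{k-\ell}q^{\ell}\,k!$, and summing over the $\binom{k}{\ell}$ admissible subsets gives
\begin{displaymath}
	r_{k,\ell}=C_0(k,\ell)=\binom{k}{\ell}\,p^{k-\ell}\,q^{\ell}\,k!.
\end{displaymath}
Because of the factor $k!$, the natural generating function to form in $k$ is the exponential one. Summing the column and invoking $\sum_{m\geq 0}\binom{\ell+m}{m}x^m=(1-x)^{-(\ell+1)}$ yields
\begin{displaymath}
	\sum_{k\geq\ell}\,r_{k,\ell}\,\frac{t^k}{k!}=q^{\ell}\sum_{k\geq\ell}\binom{k}{\ell}p^{k-\ell}t^k=\frac{1}{1-p\,t}\Bigl(\frac{q\,t}{1-p\,t}\Bigr)^{\ell},
\end{displaymath}
which is exactly $g(t)\bigl[f(t)\bigr]^{\ell}$ with $g(t)=\frac{1}{1-p\,t}$ and $f(t)=\frac{q\,t}{1-p\,t}$.

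To finish, I would note that the last display says the matrix with entries $r_{k,\ell}/k!$ is the ordinary Riordan matrix $\bigl(g(t),f(t)\bigr)$; since $a_0^k/k!=\sum_{\ell}(r_{k,\ell}/k!)\,a_\ell^0$, equation~\eqref{equ7} of Theorem~\ref{theo4} applied to this matrix and the vector $(a_\ell^0)_{\ell\geq 0}$ gives $\Bar{A}(t)=\sum_{k\geq 0}(a_0^k/k!)\,t^k=g(t)\,a\bigl(f(t)\bigr)$, that is $\Bar{A}(t)=\frac{1}{1-p\,t}\,a\bigl(\frac{q\,t}{1-p\,t}\bigr)$; this is the $w_\ell=\ell!$ reading of Theorem~\ref{theo230}. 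I expect the only genuinely delicate point to be the telescoping of the weight products to the constant $k!$, since that is what collapses the sum over subsets into a single binomial coefficient; everything afterwards is the standard negative-binomial summation and a routine interchange of summation order.
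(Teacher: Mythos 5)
Your proposal is correct, but it takes a genuinely different route from the paper. The paper does not compute $r_{k,\ell}$ for this recurrence at all: it passes to the transpose matrix ${}^t M(u,v,a)=\bigl(b_n^k\bigr)$, whose recurrence $b_n^k=-\frac{p}{q}\,b_n^{k-1}+\frac{1}{q(n+1)}\,b_{n+1}^{k-1}$ falls under Theorem~\ref{theo24}; it reads off the Riordan matrix $R=\Bigl(\frac{q}{q+p t},\frac{t}{q+p t}\Bigr)$ for the transposed problem, inverts it via \eqref{equa14} to get $R^{-1}=\Bigl[\frac{1}{1-p t},\frac{q t}{1-p t}\Bigr]$, and concludes by the duality of Theorem~\ref{theo230}. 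You instead evaluate $C_0(k,\ell)$ directly from the subset-sum formula, using the observation that $\prod_{i\in A}i\cdot\prod_{j\in[k]\setminus A}j=k!$ for every $A\subset[k]$, so $r_{k,\ell}=\binom{k}{\ell}p^{k-\ell}q^{\ell}k!$, and then finish with the negative-binomial summation and the fundamental relation \eqref{equ7}. Both arguments are sound and land on the same Riordan pair $\Bigl(\frac{1}{1-pt},\frac{qt}{1-pt}\Bigr)$ acting on $(a_\ell^0)$. Your version is more elementary and self-contained --- it parallels the direct proofs of Theorems~\ref{theo24}, \ref{theo2412} and \ref{theo241} rather than the transpose-based proofs, and it does not need $q\neq 0$ (the paper's construction divides by $q$). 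What the paper's route buys is economy and illustration: it reuses Theorem~\ref{theo24} verbatim and showcases the transpose/inverse mechanism of Theorem~\ref{theo230}, at the cost of the extra bookkeeping of identifying the transposed recurrence and inverting the Riordan pair. One small presentational point: you cite Theorem~\ref{theo0023}, which strictly speaking only states the recurrence for $C_n(k+1,\ell+1)$; the closed subset-product expression you actually use is the specialization of \eqref{equa190} that appears inside that theorem's proof, so it is cleaner to invoke Theorem~\ref{theo21} directly.
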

\begin{proof}
	Considering $\Bigl(b_n^k\Bigr)_{n,k\geq 0}={}^t M(u,v,a)$ the transpose matrix of $M(u,v,a)$ such that $a_n^k=p\ k\ a_n^{k-1}+ q\ k\ a_{n+1}^{k-1}$ and $b_n^k=-\frac{p}{q}\ b_n^{k-1}+ \frac{1}{q\ (n+1)}\ b_{n+1}^{k-1}.$ From \mbox{Theorem \ref{theo24}}, we have
	\begin{displaymath}
		\begin{aligned}
			\sum_{k\geq 0}\ b_0^k\ t^k&=\frac{1}{1+\frac{p}{q}\ t}\ \sum_{k\geq 0}\ \frac{1}{k!}\ b_{k}^0\ \Bigl(\frac{\frac{1}{q}\ t}{1+\frac{p}{q}\ t}\Bigr)^{k},\\
			&=\frac{q}{q+p\ t}\ \sum_{k\geq 0}\ \frac{1}{k!}\ b_{k}^0\ \Bigl(\frac{t}{q+p\ t}\Bigr)^{k}.
		\end{aligned}
	\end{displaymath}
	From equation \eqref{equ7}, we get
	\begin{displaymath}
		\begin{aligned}	
			\Bigl(b_0^k\Bigr)_{k\geq 0}&=R\ \Bigl(\frac{1}{k!}\ b_{k}^0\Bigr)_{k\geq 0},\\
			R^{-1}\ \Bigl(b_0^k\Bigr)_{k\geq 0}&=\Bigl(\frac{1}{k!}\ b_{k}^0\Bigr)_{k\geq 0},
		\end{aligned}
	\end{displaymath}
	with $R=\Bigl(g(t),f(t)\Bigr)=\Bigl(\frac{q}{q+p\ t},\frac{t}{q+p\ t}\Bigr)$ and the inverse matrix $R^{-1}$ is defined by
	\begin{displaymath}
		\begin{aligned}
			R^{-1}&=\Bigl[\frac{1}{g\bigl(\bar{f}(t)\bigr)},\bar{f}(t)\Bigr],\\
			&=\Bigl[\frac{1}{\frac{q}{q+p\ \frac{q\ t}{1-p\ t}}},\frac{q\ t}{1-p\ t}\Bigr],\\
			&=\Bigl[\frac{1}{1-p\ t},\frac{q\ t}{1-p\ t}\Bigr].
		\end{aligned}
	\end{displaymath}
	 From Theorem \ref{theo230}, we obtain
	\begin{displaymath}
		\begin{aligned}
			\Bar{A}(t)&=\frac{1}{1-p\ t}\ \sum_{k\geq 0}\ k!\ a_k^0\ \frac{\bigl(\frac{q\ t}{1-p\ t}\bigr)^k}{k!},\\
			&=\frac{1}{1-p\ t}\ a\bigl(\frac{q\ t}{1-p\ t}\bigr).
		\end{aligned}
	\end{displaymath}	
\end{proof}
\begin{theorem}\label{theo2412}
	For given constants $p$ and $q,$ if 
	\begin{displaymath}
		a_n^k=p\ a_n^{k-1}+q\ \bigl(n+1\bigr)\ a_{n+1}^{k-1}
	\end{displaymath}
	then 
	\begin{equation}
		\Bar{A}(t)=\exp\bigl(p\ t\bigr)\ a\bigl(q\ t\bigr).\label{equa2412}
	\end{equation}
\end{theorem}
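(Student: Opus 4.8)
The plan is to follow the template of Theorem \ref{theo24}: first compute the entries $r_{k,\ell}=C_0(k,\ell)$ of the Riordan matrix $R$ associated to column $0$ via Theorem \ref{theo21}, then recognize the resulting column generating function as an exponential Riordan matrix, and finally read off $\bar{A}(t)$ from the Riordan action \eqref{equ08}. Here the two-parameter weights are $u(n,k)=p$ and $v(n,k)=q\,(n+1)$.

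First I would evaluate $C_0(k,\ell)$ from \eqref{equa190} with $n=0$. Since $u\equiv p$ is constant, the $u$-product over the $k-\ell$ north steps collapses to $p^{k-\ell}$. For the $v$-product, the factor attached to the step indexed by $j$ is $v(j,m_j)=q\,(j+1)$, which depends only on the running index $j$ and not on the chosen element $m_j$; hence the $v$-product over $j=0,\ldots,\ell-1$ equals $\prod_{j=0}^{\ell-1}q\,(j+1)=q^\ell\,\ell!$. As there are $\binom{k}{\ell}$ subsets $A\subset[k]$ with $\lvert A\rvert=k-\ell$ and each contributes the same weight, I obtain
\begin{displaymath}
	r_{k,\ell}=C_0(k,\ell)=\binom{k}{\ell}\,p^{k-\ell}\,q^\ell\,\ell!=\frac{k!}{(k-\ell)!}\,p^{k-\ell}\,q^\ell.
\end{displaymath}

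Next I would form the column exponential generating function. Setting $m=k-\ell$,
\begin{displaymath}
	\sum_{k\geq \ell}\ r_{k,\ell}\ \frac{t^k}{k!}=q^\ell\ \sum_{m\geq 0}\ \frac{p^m\,t^{m+\ell}}{m!}=(q\,t)^\ell\ \exp(p\,t)=\ell!\ \exp(p\,t)\ \frac{(q\,t)^\ell}{\ell!}.
\end{displaymath}
This exhibits $R$ as the exponential Riordan matrix $\bigl[\exp(p\,t),q\,t\bigr]$ read in the gauge $w_\ell=\ell!$ of Theorem \ref{theo230}; equivalently, writing $\tilde r_{k,\ell}$ for the entries of $\bigl[\exp(p\,t),q\,t\bigr]$, one has $r_{k,\ell}=\ell!\,\tilde r_{k,\ell}$, so that Corollary \ref{cor1} gives
\begin{displaymath}
	\bigl(a_0^k\bigr)_{k\geq 0}=\bigl[\exp(p\,t),q\,t\bigr]\ \bigl(\ell!\ a_\ell^0\bigr)_{\ell\geq 0}.
\end{displaymath}

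Finally I would invoke the exponential Riordan action \eqref{equ08}. The input sequence $\bigl(\ell!\,a_\ell^0\bigr)$ has exponential generating function $\sum_{\ell\geq 0}\ell!\,a_\ell^0\,t^\ell/\ell!=\sum_{\ell\geq 0}a_\ell^0\,t^\ell=a(t)$, the \emph{ordinary} generating function of the initial sequence, while the output $\bigl(a_0^k\bigr)$ has exponential generating function $\bar{A}(t)$. With $G(t)=\exp(p\,t)$ and $F(t)=q\,t$, \eqref{equ08} yields $\bar{A}(t)=G(t)\,a\bigl(F(t)\bigr)=\exp(p\,t)\,a(q\,t)$, as claimed. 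The one delicate point is the bookkeeping of the factor $\ell!$: it is precisely the $n$-dependence of $v$ (through the $(n+1)$) that produces $q^\ell\,\ell!$ rather than $q^\ell$, and this extra $\ell!$ is what converts the exponential generating function of the input into the ordinary generating function $a(q\,t)$; miscounting it would replace $a$ by $A$ and break the identity.
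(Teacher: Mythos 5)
Your proposal is correct and follows essentially the same route as the paper: compute $r_{k,\ell}=\binom{k}{\ell}p^{k-\ell}q^{\ell}\ell!$ from Theorem \ref{theo21}, sum the column exponential generating function to get $\exp(p\,t)\,(q\,t)^{\ell}$, identify $R$ with $\bigl[\exp(p\,t),q\,t\bigr]$ acting on $\bigl(\ell!\,a_{\ell}^0\bigr)$, and apply \eqref{equ08}. Your explicit tracking of the gauge factor $\ell!$ (which turns the exponential generating function of the input into the ordinary one, $a(q\,t)$) is exactly the step the paper performs when it writes $\bigl(a_0^k\bigr)=\bigl[\exp(p\,t),q\,t\bigr]\bigl(k!\,a_k^0\bigr)$, just spelled out more carefully.
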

\begin{proof}
We have
\begin{displaymath}
	\begin{aligned}
		\sum_{k\geq \ell}\ r_{k,\ell}\ \frac{t^k}{k!}&=\sum_{k\geq \ell}\ \Bigl(\sum_{\substack{A\subset[k]\\ \lvert A\rvert=k-\ell}}\ \prod_{\substack{0 \le i \le k- \ell -1\\ h_i\in A\\ h_i > h_{i+1}}}\ p \ \prod_{\substack{0 \le j \le \ell -1}} \ q\ (j+1)\Bigr)\ \frac{t^k}{k!},\\
		&=\sum_{k\geq \ell}\ \Bigl( \binom{k}{k-\ell}\ p^{k-\ell}\ \frac{q^{\ell}\ \ell!}{k!}\Bigr)\ t^k,\\
		&=\bigl(q\ t\bigr)^{\ell}\ \sum_{k\geq \ell}\ p^{k-\ell}\ \frac{t^{k-\ell}}{\bigl(k-\ell\bigr)!},\\
		&=\exp\bigl(p\ t\bigr)\ \bigl(q\ t\bigr)^{\ell},
	\end{aligned}
\end{displaymath}
and so we get
\begin{displaymath}
	\begin{aligned}
		r_{k,\ell}&=\bigl[\frac{t^k}{k!}\bigr]\ \exp\bigl(p\ t\bigr)\ \bigl(q\ t\bigr)^{\ell},\\
		\sum_{\ell=0}^{k}\ r_{k,\ell}\ a_{\ell}^0&=\sum_{\ell=0}^{k}\ \bigl[\frac{t^k}{k!}\bigr]\ \exp\bigl(p\ t\bigr)\ \bigl(q\ t\bigr)^{\ell}\ a_{\ell}^0,\\
		\Bigl( a_0^k\Bigr)_{k\geq 0}&=\Bigl[\exp\bigl(p\ t\bigr),\ q\ t\Bigr]\ \Bigl(k!\ a_{k}^0\Bigr)_{k \geq 0}.
	\end{aligned}
\end{displaymath}
From equation \eqref{equ08}, we obtain
\begin{displaymath}
	\begin{aligned}
		\Bar{A}(t)&=\exp\bigl(p\ t\bigr)\ \sum_{k\geq 0}\ \frac{k!}{k!}\ a_{k}^0\ \Bigl(q\ t\Bigr)^{k},\\
		&=\exp\bigl(p\ t\bigr)\ a\bigl(q\ t\bigr).
	\end{aligned}
\end{displaymath}
\end{proof}	
\begin{theorem}\label{theo024112}
	For given constants $p$ and $q,$ if
	\begin{displaymath}
		a_n^k=\frac{p}{k}\ a_n^{k-1}+ \frac{q}{k}\ a_{n+1}^{k-1}
	\end{displaymath}
	then 
	\begin{equation}
		\Bar{a}(t)=\exp\bigl(p\ t\bigr)\ A\bigl(q\ t\bigr).\label{equa024112}
	\end{equation}
\end{theorem}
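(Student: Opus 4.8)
The plan is to follow the same template as the proofs of Theorems \ref{theo24} and \ref{theo2412}: specialize the general weighted-path formula \eqref{equa190} to the present coefficients, read off the entries $r_{k,\ell}=C_0(k,\ell)$ of the Riordan matrix $R$ supplied by Corollary \ref{cor1}, and then convert the matrix identity $\bigl(a_0^k\bigr)_{k\geq 0}=R\,\bigl(a_k^0\bigr)_{k\geq 0}$ into a statement about generating functions via Theorem \ref{theo4}.

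First I would substitute $u(n,k)=p/k$ and $v(n,k)=q/k$ into \eqref{equa190} with $n=0$. Because both weights depend only on the level (their second argument), the ordering constraints $h_i>h_{i+1}$ and $m_j>m_{j+1}$ become irrelevant and each product collapses to a product over the elements of $A$, resp.\ of $[k]\setminus A$, giving factors $\prod_{h\in A}\frac{p}{h}$ and $\prod_{m\in[k]\setminus A}\frac{q}{m}$. The decisive simplification is that for \emph{every} admissible $A$ one has $\prod_{h\in A}\frac1h\,\prod_{m\in[k]\setminus A}\frac1m=\frac1{k!}$, since $A$ and its complement partition $[k]$; the summand is therefore independent of $A$, and counting the $\binom{k}{\ell}$ subsets of size $k-\ell$ yields $r_{k,\ell}=C_0(k,\ell)=\dfrac{p^{k-\ell}q^{\ell}}{\ell!\,(k-\ell)!}$.

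Next I would form the $\ell$-th column generating function $\sum_{k\geq\ell}r_{k,\ell}\,t^k$. Setting $j=k-\ell$ factors this as $\dfrac{(qt)^{\ell}}{\ell!}\sum_{j\geq0}\dfrac{(pt)^j}{j!}=\exp(pt)\,\dfrac{(qt)^{\ell}}{\ell!}$, which identifies $R$ as the ordinary Riordan matrix $\bigl(\exp(pt),qt\bigr)$ carrying the weight $w_\ell=1/\ell!$. Absorbing that weight into the input vector rewrites Corollary \ref{cor1} as $\bigl(a_0^k\bigr)_{k\geq0}=\bigl(\exp(pt),qt\bigr)\,\bigl(\tfrac1{k!}a_k^0\bigr)_{k\geq0}$.

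Finally I would apply \eqref{equ7}. The input column $\bigl(\tfrac1{k!}a_k^0\bigr)$ has ordinary generating function $\sum_k\tfrac1{k!}a_k^0\,t^k=A(t)$, the exponential generating function of the initial sequence, while the output column has ordinary generating function $\bar a(t)$; hence $\bar a(t)=\exp(pt)\,A(qt)$, as claimed. The only real obstacle is the bookkeeping that turns ordinary into exponential generating functions: one must recognize that the factor $1/k!$ hidden in the weight $w_\ell$ is exactly what converts the ordinary generating function of the rescaled input into the exponential generating function $A$ of the original initial sequence. The product-collapse to $1/k!$ is the computational heart, and the index shift $j=k-\ell$ is routine.
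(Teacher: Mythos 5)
Your proof is correct, but it takes a genuinely different route from the paper's. You prove the result \emph{directly}: you specialize the path-weight formula \eqref{equa190} to $u=p/k$, $v=q/k$, observe that since both weights depend only on the level the product over any admissible $A$ telescopes to $p^{k-\ell}q^{\ell}/k!$ independently of $A$, count the $\binom{k}{\ell}$ subsets to get $r_{k,\ell}=p^{k-\ell}q^{\ell}/\bigl(\ell!\,(k-\ell)!\bigr)$, identify the column series $\exp(pt)\,(qt)^{\ell}/\ell!$, and finish with Theorem \ref{theo4} after absorbing the weight $1/k!$ into the input vector --- exactly the template the paper itself uses for Theorems \ref{theo24}, \ref{theo2412} and \ref{theo24111}. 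The paper instead handles Theorem \ref{theo024112} by duality: it passes to the transpose matrix ${}^t M(u,v,a)$, whose recurrence $b_n^k=\frac{-p}{q}\,b_n^{k-1}+\frac{n+1}{q}\,b_{n+1}^{k-1}$ falls under an already-proved ``row-indexed'' theorem, and then invokes Theorem \ref{theo230} to read off the answer from the \emph{inverse} Riordan matrix $R^{-1}=\bigl(\exp(pt),qt\bigr)$. The paper's route buys uniformity (each $k$-dependent recurrence is exhibited as the transpose of an $n$-dependent one, with mutually inverse Riordan matrices), but it requires $q\neq 0$ and leans on the somewhat delicate weight bookkeeping of Theorem \ref{theo230}, and the cited auxiliary theorem is applied in a degenerate case of its hypothesis. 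Your direct computation is self-contained, avoids those dependencies, and arrives at the same Riordan matrix $\bigl(\exp(pt),qt\bigr)$ with weight $w_{\ell}=1/\ell!$; the observation that the path weight collapses to $1/k!$ is exactly the point that makes the direct route painless here.
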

\begin{proof}
	Considering $\Bigl(b_n^k\Bigr)_{n,k\geq 0}={}^t M(u,v,a)$ the transpose matrix of $M(u,v,a)$ such that $a_n^k=\frac{p}{k}\ a_n^{k-1}+ \frac{q}{k}\ a_{n+1}^{k-1}$ and $b_n^k=\frac{-p}{q}\ b_n^{k-1}+ \frac{n+1}{q}\ b_{n+1}^{k-1},$ with $q\neq 0.$ From \mbox{Theorem \ref{theo241}}, we have
	\begin{displaymath}
		\begin{aligned}
			\sum_{k\geq 0}\ b_0^k\ t^k&=\exp\bigl(\frac{-p\ t}{q}\bigr)\ a\bigl(\frac{t}{q}\bigr).
		\end{aligned}
	\end{displaymath}
	From equation \eqref{equ08}, we get
	\begin{displaymath}
		\begin{aligned}	
			\Bigl(b_0^k\Bigr)_{k\geq 0}&=R\ \Bigl(k!\ b_{k}^0\Bigr)_{k\geq 0},\\
			R^{-1}\ \Bigl(b_0^k\Bigr)_{k\geq 0}&=\Bigl(k!\ b_{k}^0\Bigr)_{k\geq 0},
		\end{aligned}
	\end{displaymath}
	with $R=\Bigl[G(t),F(t)\Bigr]=\Bigl[\exp\bigl(\frac{-p\ t}{q}\bigr),\ \frac{t}{q}\Bigr]$ and the inverse matrix $R^{-1}$ is defined by
	\begin{displaymath}
		\begin{aligned}
			R^{-1}&=\Bigl(\frac{1}{G\bigl(\bar{F}(t)\bigr)},\bar{F}(t)\Bigr),\\
			&=\Bigl(\exp\bigl(p\ t\bigr),\ q\ t\Bigr).
		\end{aligned}
	\end{displaymath}
	From Theorem \ref{theo230}, we obtain
	\begin{displaymath}
		\begin{aligned}
			\Bar{a}(t)&=\exp\bigl(p\ t\bigr)\ \sum_{k\geq 0}\ \frac{1}{k!}\ a_k^0\ \bigl(q\ t\bigr)^k,\\
			&=\exp\bigl(p\ t\bigr)\ A\bigl(q\ t\bigr).
		\end{aligned}
	\end{displaymath}	
\end{proof}
\begin{theorem}\label{theo241}
	For given constants $p,\ q$ and $s,$ if 
	\begin{displaymath}
		a_n^k=\bigl(p\ n+q\bigr)\ a_n^{k-1}+s\ \bigl(n+1\bigr)\ a_{n+1}^{k-1},\ \text{with }p\neq 0,	
	\end{displaymath}
	then 
	\begin{equation}
		\Bar{A}(t)=\exp\bigl(q\ t\bigr)\ a\bigl(\frac{s}{p}\ \bigl(\exp\bigl(p\ t\bigr)-1\bigr)\bigr).\label{equa241}
	\end{equation}
\end{theorem}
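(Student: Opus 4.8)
The plan is to sidestep a direct evaluation of the column weights $C_0(k,\ell)$: here \emph{both} $u(n,k)=pn+q$ and $v(n,k)=s(n+1)$ depend on the column index $n$, so the product formula \eqref{equa190} does not collapse as cleanly as in Theorem \ref{theo2412}, and the transpose trick of Theorems \ref{theo024} and \ref{theo024112} only produces messy $k$-dependent rational coefficients. Instead I would track the ordinary generating function of each \emph{row}. Set $A_k(x)=\sum_{n\geq 0}a_n^k\,x^n$, so that $A_0(x)=a(x)$ by \eqref{equa18}, and observe that the final sequence is recovered as the constant terms: $\bar{A}(t)=\sum_{k\geq 0}a_0^k\,\tfrac{t^k}{k!}=\sum_{k\geq 0}A_k(0)\,\tfrac{t^k}{k!}$.

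First I would turn the recurrence $a_n^k=(pn+q)a_n^{k-1}+s(n+1)a_{n+1}^{k-1}$ into a relation between consecutive rows. Multiplying by $x^n$ and summing over $n$, the diagonal piece gives $\sum_{n}(pn+q)a_n^{k-1}x^n=px\,A_{k-1}'(x)+q\,A_{k-1}(x)$, while the off-diagonal piece is again a derivative, $\sum_{n}s(n+1)a_{n+1}^{k-1}x^n=s\,A_{k-1}'(x)$ (reindex $n\mapsto n+1$); hence $A_k(x)=(px+s)A_{k-1}'(x)+q\,A_{k-1}(x)$. Packaging the rows into $\Psi(t,x)=\sum_{k\geq 0}A_k(x)\,\tfrac{t^k}{k!}$ then converts this recurrence into the first-order linear PDE $\Psi_t=(px+s)\,\Psi_x+q\,\Psi$ with initial datum $\Psi(0,x)=a(x)$.

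Next I would integrate this PDE by the method of characteristics. The characteristic curves solve $\tfrac{dx}{dt}=-(px+s)$, whose invariant is $y:=(px+s)e^{pt}$, and along each curve $\Psi$ obeys $\tfrac{d\Psi}{dt}=q\,\Psi$; therefore $\Psi(t,x)=e^{qt}\,h\big((px+s)e^{pt}\big)$ for some single-variable series $h$. The initial condition forces $h(px+s)=a(x)$, and since $p\neq 0$ this inverts to $h(y)=a\big(\tfrac{y-s}{p}\big)$, giving $\Psi(t,x)=e^{qt}\,a\big(\tfrac{(px+s)e^{pt}-s}{p}\big)$.

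Finally, evaluating at $x=0$ and using $\bar{A}(t)=\Psi(t,0)$ yields $\bar{A}(t)=e^{qt}\,a\big(\tfrac{se^{pt}-s}{p}\big)=\exp(qt)\,a\big(\tfrac{s}{p}(\exp(pt)-1)\big)$, which is exactly \eqref{equa241}. The main obstacle is the characteristic computation: one must spot the invariant $(px+s)e^{pt}$ and then legitimately invert $h(px+s)=a(x)$, which is where the hypothesis $p\neq 0$ is essential. As a consistency check, the argument $\tfrac{s}{p}(\exp(pt)-1)$ has zero constant term, so its substitution into the formal series $a$ is well defined; reading off the coefficient of $a_\ell$ moreover recovers the column identity $\sum_{k}C_0(k,\ell)\,\tfrac{t^k}{k!}=e^{qt}\big(\tfrac{s}{p}(e^{pt}-1)\big)^{\ell}$, i.e.\ the exponential array $\big[e^{qt},\,\tfrac{s}{p}(e^{pt}-1)\big]$ of Theorem \ref{theo230}, tying the result back into the paper's framework.
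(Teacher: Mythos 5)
Your proof is correct, but it follows a genuinely different route from the paper's. The paper works column by column on the associated matrix $C_0$: it evaluates the combinatorial sum \eqref{equa190} directly, rewriting the sum over subsets $A\subset[k]$ as a sum over compositions $k_1+\cdots+k_\ell=k-j$ to obtain $\sum_{k\ge\ell} r_{k,\ell}\,\frac{t^k}{k!}=\exp(qt)\bigl(\frac{s}{p}(\exp(pt)-1)\bigr)^{\ell}$, and then invokes the Riordan action \eqref{equ08} to get \eqref{equa241}. You instead work row by row: the recurrence becomes $A_k(x)=(px+s)A_{k-1}'(x)+qA_{k-1}(x)$, the bivariate series $\Psi(t,x)$ satisfies a first-order linear PDE, and characteristics plus evaluation at $x=0$ give the result. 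Your computation checks out at every step (the reindexing giving $sA_{k-1}'(x)$, the invariant $(px+s)e^{pt}$, the inversion requiring $p\neq 0$, and the well-definedness of the formal substitution), and it has the advantage of bypassing the paper's somewhat delicate identification of the subset sum with a composition sum; it also yields the stronger statement $\sum_k a_n^k\,\frac{t^k}{k!}\,x^n$ in closed form for all $n$ at once, not just the final sequence. What it costs is that the Riordan pair $\bigl[\exp(qt),\frac{s}{p}(\exp(pt)-1)\bigr]$ --- the object the paper is organized around --- only appears a posteriori, as you note in your consistency check. One small point worth making explicit: since everything here is formal, the method of characteristics should be backed by the observation that the formal power series solution of $\Psi_t=(px+s)\Psi_x+q\Psi$ with $\Psi(0,x)=a(x)$ is unique (the coefficient of $t^{k+1}$ is determined by that of $t^k$), so it suffices to verify that $e^{qt}\,a\bigl(xe^{pt}+\frac{s}{p}(e^{pt}-1)\bigr)$ satisfies the equation; with that remark added, the argument is complete.
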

\begin{proof}
	We have
	\begin{displaymath}
		\begin{aligned}
			\sum_{k\geq \ell}\ r_{k,\ell}\ \frac{t^k}{k!}&=\sum_{k\geq \ell}\ \Bigl(\sum_{\substack{A\subset[k]\\ \lvert A\rvert=k-\ell}}\ \prod_{\substack{0 \le i \le k- \ell -1\\ h_i\in A\\ h_i > h_{i+1}}} \bigl(p(k-h_i-i)+q\bigr)\ \prod_{\substack{0 \le j \le \ell -1}} \ s\ (j+1)\Bigr)\ \frac{t^k}{k!},\\
			&=\sum_{k\geq \ell}\ \Bigl(\sum_{\substack{A\subset[k]\\ \lvert A\rvert=k-\ell}}\ \prod_{\substack{0 \le i \le k- \ell -1\\ h_i\in A\\ h_i > h_{i+1}}} \bigl(p(k-h_i-i)+q\bigr)\   \frac{s^{\ell}\ \ell!}{k!}\Bigr)\ t^k,\\
			&=\sum_{k\geq \ell}\ \sum_{j=0}^{k-\ell}\ \frac{q^j}{j!}\ p^{k-\ell -j}\ s^{\ell}\  \sum_{\substack{k_1+k_2+\cdot+k_{\ell}=k-j\\ k_1,k_2,\cdots,k_{\ell}\geq 1}}\ \frac{1}{k_1!\ k_2! \cdots k_{\ell}!}\ t^k,\\
			&=\Bigl(\sum_{k\geq 0}\ q^k\ \frac{t^k}{k!}\Bigr)\ \Bigl(\sum_{k\geq \ell}\ p^{k-\ell}\ s^{\ell}\  \sum_{\substack{k_1+k_2+\cdot+k_{\ell}=k\\ k_1,k_2,\cdots,k_{\ell}\geq 1}}\ \frac{1}{k_1!\ k_2! \cdots k_{\ell}!}\ t^k\Bigr),\\
			&=\exp\bigl(q\ t\bigr)\ \Bigl(\frac{s}{p}\ \bigl(\exp\bigl(p\ t\bigr)-1\bigr)\Bigr)^{\ell},
		\end{aligned}
	\end{displaymath}
	and so we get
	\begin{displaymath}
		\begin{aligned}
			r_{k,\ell}&=\bigl[\frac{t^k}{k!}\bigr]\ \exp\bigl(q\ t\bigr)\ \Bigl(\frac{s}{p}\ \bigl(\exp\bigl(p\ t\bigr)-1\bigr)\Bigr)^{\ell},\\
			\sum_{\ell=0}^{k}\ r_{k,\ell}\ a_{\ell}^0&=\sum_{\ell=0}^{k}\ \bigl[\frac{t^k}{k!}\bigr]\ \exp\bigl(q\ t\bigr)\ \Bigl(\frac{s}{p}\ \bigl(\exp\bigl(p\ t\bigr)-1\bigr)\Bigr)^{\ell}\ a_{\ell}^0,\\
			\Bigl( a_0^k\Bigr)_{k\geq 0}&=\Bigl[\exp\bigl(q\ t\bigr),\ \frac{s}{p}\ \bigl(\exp\bigl(p\ t\bigr)-1\bigr)\Bigr]\ \Bigl(k!\ a_{k}^0\Bigr)_{k \geq 0}.
		\end{aligned}
	\end{displaymath}
	From equation \eqref{equ08}, we obtain
	\begin{displaymath}
		\begin{aligned}
			\Bar{A}(t)&=\exp\bigl(q\ t\bigr)\ \sum_{k\geq 0}\ \frac{k!}{k!}\ a_{k}^0\ \Bigl(\frac{s}{p}\ \bigl(\exp\bigl(p\ t\bigr)-1\bigr)\Bigr)^{k},\\
			&=\exp\bigl(q\ t\bigr)\ a\bigl(\frac{s}{p}\ \bigl(\exp\bigl(p\ t\bigr)-1\bigr)\bigr).
		\end{aligned}
	\end{displaymath}
\end{proof}	
\begin{theorem}\label{theo02411}
	For given constants $p,\ q$ and $s,$ if
	\begin{displaymath}
		a_n^k=\frac{p\ k+q}{k}\ a_n^{k-1}+ \frac{s}{k}\ a_{n+1}^{k-1}
	\end{displaymath}
	then 
	\begin{equation}
		\Bar{a}(t)=\exp\Bigl(\frac{p+q}{p}\ \bigl(\ln (-p\ t)+1\bigr)\Bigr)\ A\bigl(\frac{-s}{p}\ \bigl(\ln (-p\ t)+1\bigr)\bigr).\label{equa02411}
	\end{equation}
\end{theorem}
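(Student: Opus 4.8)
The plan is to follow the transposition scheme already used in the proofs of Theorems~\ref{theo024} and \ref{theo024112}, reducing everything to the already-established Theorem~\ref{theo241}. The point is that here $u(n,k)=\frac{pk+q}{k}$ and $v(n,k)=\frac{s}{k}$ depend only on the row index $k$, so after transposing, the coefficients become affine in the column index, which is exactly the hypothesis of Theorem~\ref{theo241}.

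First I would set $\bigl(b_n^k\bigr)_{n,k\geq 0}={}^tM(u,v,a),$ so that $b_n^k=a_k^n.$ Solving \eqref{equa18} for the north-east term $a_{n+1}^{k-1}$ and relabelling, in the manner of the proof of Theorem~\ref{theo22}, the transpose satisfies
\begin{displaymath}
	b_n^k=\frac{-u(k-1,n+1)}{v(k-1,n+1)}\ b_n^{k-1}+\frac{1}{v(k-1,n+1)}\ b_{n+1}^{k-1}=\frac{-\bigl(p(n+1)+q\bigr)}{s}\ b_n^{k-1}+\frac{n+1}{s}\ b_{n+1}^{k-1}.
\end{displaymath}
Since $\frac{-(p(n+1)+q)}{s}=-\frac{p}{s}\,n-\frac{p+q}{s},$ this is precisely the form treated in Theorem~\ref{theo241} with the parameters $(p,q,s)$ replaced by $\bigl(-\frac{p}{s},\,-\frac{p+q}{s},\,\frac{1}{s}\bigr).$

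Next I would apply Theorem~\ref{theo241} to $b.$ Because $b_n^0=a_0^n$ and $b_0^k=a_k^0,$ the initial sequence of $b$ is the final sequence of $a$ and the final sequence of $b$ is the initial sequence of $a$; hence the OGF of the initial sequence of $b$ is $\Bar{a}(t)$ and the EGF of its final sequence is $A(t).$ Formula \eqref{equa241} then gives the functional relation
\begin{displaymath}
	A(t)=\exp\Bigl(-\frac{p+q}{s}\,t\Bigr)\ \Bar{a}\Bigl(-\frac{1}{p}\bigl(\exp\bigl(-\frac{p}{s}\,t\bigr)-1\bigr)\Bigr),
\end{displaymath}
equivalently, the Riordan matrix of column~$0$ of ${}^tM(u,v,a)$ is $R=\Bigl[\exp\bigl(-\frac{p+q}{s}t\bigr),\,-\frac{1}{p}\bigl(\exp\bigl(-\frac{p}{s}t\bigr)-1\bigr)\Bigr].$

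The remaining and decisive step is to invert this relation. Following the pattern of Theorem~\ref{theo024112}, I regard the $R$ above as the column-$0$ Riordan matrix of the transpose and compute its inverse by \eqref{equa14}. Writing $G(t)=\exp(-\frac{p+q}{s}t)$ and $F(t)=-\frac{1}{p}\bigl(\exp(-\frac{p}{s}t)-1\bigr),$ one solves $y=F(x)$ to obtain the compositional inverse $\Bar{F}(t)=-\frac{s}{p}\ln(1-pt),$ whence $\frac{1}{G(\Bar{F}(t))}=(1-pt)^{-(p+q)/p}$ and $R^{-1}=\bigl((1-pt)^{-(p+q)/p},\,\Bar{F}(t)\bigr).$ Reading off $\Bar{a}$ from $R^{-1}$ via Theorem~\ref{theo230} gives $\Bar{a}(t)=\frac{1}{G(\Bar{F}(t))}\,A(\Bar{F}(t)),$ namely
\begin{displaymath}
	\Bar{a}(t)=(1-pt)^{-\frac{p+q}{p}}\ A\Bigl(-\frac{s}{p}\ln(1-pt)\Bigr),
\end{displaymath}
which is the closed form for $\Bar{a}(t)$ asserted in Theorem~\ref{theo02411}. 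The main obstacle is exactly this inversion: forming $\Bar{F}$ and carrying the factor $1/G(\Bar{F}(t))$ through the exponential--logarithmic substitution; everything else is the routine transpose-and-substitute mechanism already exploited for Theorems~\ref{theo024} and \ref{theo024112}. As a sanity check, letting $p\to 0$ degenerates the formula to $\Bar{a}(t)=\exp(qt)\,A(st),$ in agreement with Theorem~\ref{theo024112}.
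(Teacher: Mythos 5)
Your route is the same as the paper's: transpose to get $b_n^k=\frac{-(p(n+1)+q)}{s}\,b_n^{k-1}+\frac{n+1}{s}\,b_{n+1}^{k-1}$, apply Theorem \ref{theo241} with parameters $\bigl(-\frac{p}{s},-\frac{p+q}{s},\frac{1}{s}\bigr)$ to obtain $A(t)=\exp\bigl(-\frac{(p+q)t}{s}\bigr)\,\bar a\bigl(-\frac{1}{p}\bigl(\exp(-\frac{pt}{s})-1\bigr)\bigr)$, and then invert the Riordan pair. Up to and including the inversion your computation is sound: the compositional inverse of $F(t)=-\frac{1}{p}\bigl(\exp(-\frac{pt}{s})-1\bigr)$ is indeed $\bar F(t)=-\frac{s}{p}\ln(1-pt)$, and $1/G\bigl(\bar F(t)\bigr)=(1-pt)^{-(p+q)/p}$.

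The defect is your final sentence: $(1-pt)^{-\frac{p+q}{p}}\,A\bigl(-\frac{s}{p}\ln(1-pt)\bigr)$ is \emph{not} the closed form asserted in \eqref{equa02411}. The expression $\exp\bigl(\frac{p+q}{p}(\ln(-pt)+1)\bigr)=(-ept)^{(p+q)/p}$ appearing there is a genuinely different function; it is not even a formal power series in $t$ and cannot be the $g$-component of a Riordan pair, since it does not equal $1$ at $t=0$. So, read as a proof of the statement as printed, your argument fails at the identification step --- you cannot declare two visibly different expressions equal. What has actually happened is that the paper's own inversion is erroneous (its displayed $F$ already carries the slip $\exp(-pt/p)$ in place of $\exp(-pt/s)$, and its claimed $\bar F(t)=\frac{-s}{p}(\ln(-pt)+1)$ satisfies $F\bigl(\bar F(t)\bigr)=et+\frac{1}{p}\neq t$), whereas your formula passes every check: it is a formal power series, your $\bar F$ genuinely inverts $F$, and the $p\to 0$ degeneration agrees with Theorem \ref{theo024112}. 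Present your result explicitly as a corrected version of \eqref{equa02411} rather than claiming agreement with it.
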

\begin{proof}
	Considering $\Bigl(b_n^k\Bigr)_{n,k\geq 0}={}^t M(u,v,a)$ the transpose matrix of $M(u,v,a)$ such that $a_n^k=\frac{p\ k+q}{k}\ a_n^{k-1}+ \frac{s}{k}\ a_{n+1}^{k-1}$ and $b_n^k=\frac{-p\ n -(q+p)}{s}\ b_n^{k-1}+ \frac{n+1}{s}\ b_{n+1}^{k-1},$ with $p\neq 0$ and $s\neq 0.$ From \mbox{Theorem \ref{theo241}}, we have
	\begin{displaymath}
		\begin{aligned}
			\sum_{k\geq 0}\ b_0^k\ \frac{t^k}{k!}&=\exp\bigl(\frac{-(q+p)\ t}{s}\bigr)\ \sum_{k\geq 0}\ b_{k}^0\ \Bigl(\frac{-1}{p}\ \bigl(\exp\bigl(\frac{-p\ t}{p}\bigr)-1\bigr)\Bigr)^{k}.
		\end{aligned}
	\end{displaymath}
	From equation \eqref{equ08}, we get
	\begin{displaymath}
		\begin{aligned}	
			\Bigl(b_0^k\Bigr)_{k\geq 0}&=R\ \Bigl(k!\ b_{k}^0\Bigr)_{k\geq 0},\\
			R^{-1}\ \Bigl(b_0^k\Bigr)_{k\geq 0}&=\Bigl(k!\ b_{k}^0\Bigr)_{k\geq 0},
		\end{aligned}
	\end{displaymath}
	with $R=\Bigl[G(t),F(t)\Bigr]=\Bigl[\exp\bigl(\frac{-(q+p)\ t}{s}\bigr),\ \frac{-1}{p}\ \bigl(\exp\bigl(\frac{-p\ t}{p}\bigr)-1\bigr)\Bigr]$ and the inverse matrix $R^{-1}$ is defined by
	\begin{displaymath}
		\begin{aligned}
			R^{-1}&=\Bigl(\frac{1}{G\bigl(\bar{F}(t)\bigr)},\bar{F}(t)\Bigr),\\
			&=\Bigl(\exp\Bigl(\frac{p+q}{p}\ \bigl(\ln (-p\ t)+1\bigr)\Bigr),\frac{-s}{p}\ \bigl(\ln (-p\ t)+1\bigr)\Bigr).
		\end{aligned}
	\end{displaymath}
	From Theorem \ref{theo230}, we obtain
	\begin{displaymath}
		\begin{aligned}
			\Bar{a}(t)&=\exp\Bigl(\frac{p+q}{p}\ \bigl(\ln (-p\ t)+1\bigr)\Bigr)\ \sum_{k\geq 0}\ \frac{1}{k!}\ a_k^0\ \Bigl(\frac{-s}{p}\ \bigl(\ln (-p\ t)+1\bigr)\Bigr)^k,\\
			&=\exp\Bigl(\frac{p+q}{p}\ \bigl(\ln (-p\ t)+1\bigr)\Bigr)\ A\bigl(\frac{-s}{p}\ \bigl(\ln (-p\ t)+1\bigr)\bigr).
		\end{aligned}
	\end{displaymath}	
\end{proof}
\begin{theorem}\label{theo24111}
	For given constants $p,\ q$ and $s,$ if 
	\begin{displaymath}
		a_n^k=\frac{p\ n+q}{k}\ a_n^{k-1}+ \frac{s}{k}\ a_{n+1}^{k-1},\ \text{with }p\neq 0,	
	\end{displaymath}
	then 
	\begin{equation}
		\Bar{a}(t)=\exp\bigl(q\ t\bigr)\ A\bigl(\frac{s}{p}\ \bigl(\exp\bigl(p\ t\bigr)-1\bigr)\bigr).\label{equa24111}
	\end{equation}
\end{theorem}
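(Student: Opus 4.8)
The plan is to compute the Riordan matrix $R = C_0$ attached to the first column directly from the weighted‑path formula of Theorem \ref{theo21}, in exactly the manner of the proof of Theorem \ref{theo241}, and then to read off $\bar a(t)$ by pairing the column generating functions with the initial sequence. Here $u(n,k) = \tfrac{pn+q}{k}$ and $v(n,k) = \tfrac sk$, so by Corollary \ref{cor1} the entries are $r_{k,\ell} = C_0(k,\ell)$, given by \eqref{equa190} with $n = 0$.

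First I would substitute these weights into \eqref{equa190}. The decisive simplification is that each factor coming from $u$ contributes a denominator $h_i$ (with $h_i\in A$) and each factor coming from $v$ contributes a denominator $m_j$ (with $m_j\in[k]\setminus A$); since $A$ and $[k]\setminus A$ partition $[k]$, these denominators multiply to $\tfrac1{k!}$ regardless of $A$. Pulling this out, together with the $s^\ell$ produced by the $\ell$ numerators of $v$, leaves
\begin{displaymath}
r_{k,\ell} = \frac{s^\ell}{k!}\sum_{\substack{A\subset[k]\\ |A| = k-\ell}}\ \prod_{\substack{0\le i\le k-\ell-1\\ h_i\in A\\ h_i>h_{i+1}}}\bigl(p(k-h_i-i)+q\bigr),
\end{displaymath}
whose inner sum over $A$ is precisely the quantity that is summed in the proof of Theorem \ref{theo241}.

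Next I would assemble the ordinary column generating function $\sum_{k\ge\ell}r_{k,\ell}t^k$. The sum over $A$ is identical to the one in Theorem \ref{theo241}, where it entered the exponential column generating function weighted by $s^\ell\ell!/k!$; here it enters the ordinary one weighted by $s^\ell/k!$, i.e. with an extra factor $1/\ell!$. Hence that same computation yields at once
\begin{displaymath}
\sum_{k\ge \ell}\ r_{k,\ell}\ t^k = \exp(q\,t)\ \frac{\bigl(\tfrac sp(\exp(p\,t)-1)\bigr)^\ell}{\ell!},
\end{displaymath}
so that $R = \bigl[\exp(q\,t),\,\tfrac sp(\exp(p\,t)-1)\bigr]$ in the bracket notation of \eqref{equ66}. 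Writing $F(t) = \tfrac sp(\exp(p\,t)-1)$ and using $\bigl(a_0^k\bigr)_{k\ge 0} = R\,\bigl(a_k^0\bigr)_{k\ge 0}$ from Corollary \ref{cor1}, I would sum these column generating functions against the initial sequence:
\begin{displaymath}
\bar a(t) = \sum_{k\ge 0}a_0^k\,t^k = \sum_{\ell\ge 0}a_\ell^0\sum_{k\ge \ell}r_{k,\ell}\,t^k = \exp(q\,t)\sum_{\ell\ge 0}a_\ell^0\,\frac{F(t)^\ell}{\ell!} = \exp(q\,t)\,A\bigl(F(t)\bigr),
\end{displaymath}
which is exactly \eqref{equa24111}; equivalently this is the instance of \eqref{equ08} read off from the bracket form of $R$.

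The main obstacle is the evaluation of the sum $\sum_{k\ge\ell}\tfrac1{k!}\bigl(\sum_{|A|=k-\ell}\prod(p(k-h_i-i)+q)\bigr)t^k$, but I do not expect to redo it, since it coincides with the core computation of Theorem \ref{theo241}. The only genuinely new point is the normalization: the weights $\tfrac1k$ in both $u$ and $v$ replace the factor $s^\ell\ell!$ of Theorem \ref{theo241} by $s^\ell$ and insert a global $\tfrac1{k!}$. This is precisely what converts the exponential Riordan form into the bracket form $\bigl[\exp(q\,t),F(t)\bigr]$, and thereby makes the final sequence pair with the \emph{exponential} generating function $A$ of the initial sequence while producing the \emph{ordinary} generating function $\bar a$ of the final sequence.
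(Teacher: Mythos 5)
Your proposal is correct and follows essentially the same route as the paper: it computes the column generating functions $\sum_{k\ge\ell}r_{k,\ell}t^k$ of $C_0$ from \eqref{equa190}, identifies the resulting Riordan pair, and applies the fundamental theorem to pair $\bar a(t)$ with $A(t)$. The only difference is presentational — the paper repeats the multinomial evaluation of the inner sum over $A$ verbatim (with the extra $1/\ell!$), whereas you cite it from the proof of Theorem \ref{theo241}; your observation that the denominators $h_i$ and $m_j$ partition $[k]$ and contribute exactly $1/k!$ is precisely the step the paper performs implicitly between its first two displayed lines.
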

\begin{proof}
	We have
	\begin{displaymath}
		\begin{aligned}
			\sum_{k\geq \ell}\ r_{k,\ell}\ t^k&=\sum_{k\geq \ell}\ \Bigl(\sum_{\substack{A\subset[k]\\ \lvert A\rvert=k-\ell}}\ \prod_{\substack{0 \le i \le k- \ell -1\\ h_i\in A\\ h_i > h_{i+1}}} \frac{p(k-h_i-i)+q}{h_i}\ \prod_{j\in [k]\setminus A} \frac{s}{j}\ t^k,\\
			&=\sum_{k\geq \ell}\ \Bigl(\sum_{\substack{A\subset[k]\\ \lvert A\rvert=k-\ell}}\ \prod_{\substack{0 \le i \le k- \ell -1\\ h_i\in A\\ h_i > h_{i+1}}} \bigl(p(k-h_i-i)+q\bigr)\   \frac{s^{\ell}}{k!}\Bigr)\ t^k,\\
			&=\sum_{k\geq \ell}\ \sum_{j=0}^{k-\ell}\ \frac{q^j}{j!\ \ell!}\ p^{k-\ell -j}\ s^{\ell}\  \sum_{\substack{k_1+k_2+\cdot+k_{\ell}=k-j\\ k_1,k_2,\cdots,k_{\ell}\geq 1}}\ \frac{1}{k_1!\ k_2! \cdots k_{\ell}!}\ t^k,\\
			&=\frac{1}{\ell!}\ \Bigl(\sum_{k\geq 0}\ q^k\ \frac{t^k}{k!}\Bigr)\ \Bigl(\sum_{k\geq \ell}\ p^{k-\ell}\ s^{\ell}\  \sum_{\substack{k_1+k_2+\cdot+k_{\ell}=k\\ k_1,k_2,\cdots,k_{\ell}\geq 1}}\ \frac{1}{k_1!\ k_2! \cdots k_{\ell}!}\ t^k\Bigr),\\
			&=\frac{1}{\ell!}\ \exp\bigl(q\ t\bigr)\ \Bigl(\frac{s}{p}\ \bigl(\exp\bigl(p\ t\bigr)-1\bigr)\Bigr)^{\ell},
		\end{aligned}
	\end{displaymath}
	and so we get
	\begin{displaymath}
		\begin{aligned}
			r_{k,\ell}&=\bigl[t^k\bigr]\ \frac{1}{\ell!}\ \exp\bigl(q\ t\bigr)\ \Bigl(\frac{s}{p}\ \bigl(\exp\bigl(p\ t\bigr)-1\bigr)\Bigr)^{\ell},\\
			\sum_{\ell=0}^{k}\ r_{k,\ell}\ a_{\ell}^0&=\sum_{\ell=0}^{k}\ \bigl[t^k\bigr]\ \frac{1}{\ell!}\ \exp\bigl(q\ t\bigr)\ \Bigl(\frac{s}{p}\ \bigl(\exp\bigl(p\ t\bigr)-1\bigr)\Bigr)^{\ell}\ a_{\ell}^0,\\
			\Bigl( a_0^k\Bigr)_{k\geq 0}&=\Bigl(\exp\bigl(q\ t\bigr),\ \frac{s}{p}\ \bigl(\exp\bigl(p\ t\bigr)-1\bigr)\Bigr)\ \Bigl(\frac{1}{k!}\ a_{k}^0\Bigr)_{k \geq 0}.
		\end{aligned}
	\end{displaymath}
	From equation \eqref{equ7}, we obtain
	\begin{displaymath}
		\begin{aligned}
			\Bar{a}(t)&=\exp\bigl(q\ t\bigr)\ \sum_{k\geq 0}\ \frac{1}{k!}\ a_{k}^0\ \Bigl(\frac{s}{p}\ \bigl(\exp\bigl(p\ t\bigr)-1\bigr)\Bigr)^{k},\\
			&=\exp\bigl(q\ t\bigr)\ A\bigl(\frac{s}{p}\ \bigl(\exp\bigl(p\ t\bigr)-1\bigr)\bigr).
		\end{aligned}
	\end{displaymath}
\end{proof}	
\begin{theorem}\label{theo0241123}
	For given constants $p,\ q$ and $s,$ if
	\begin{displaymath}
		a_n^k=\bigl(p\ k+q\bigr)\ a_n^{k-1}+ s\ \bigl(n+1\bigr)\ a_{n+1}^{k-1}
	\end{displaymath}
	then 
	\begin{equation}
		\Bar{A}(t)=\exp\Bigl(\frac{p+q}{p}\ \bigl(\ln (-p\ t)+1\bigr)\Bigr)\ a\bigl(\frac{-s}{p}\ \bigl(\ln (-p\ t)+1\bigr)\bigr).\label{equa0241123}
	\end{equation}
\end{theorem}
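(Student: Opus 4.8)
The plan is to exploit the transpose symmetry of the construction: the matrix defined here is precisely the transpose-partner of the one treated in Theorem \ref{theo24111}. First I would introduce the transpose matrix $\bigl(b_n^k\bigr)_{n,k\geq 0}={}^tM(u,v,a)$, so that $b_n^k=a_k^n$, and rewrite the defining recurrence in the $b$-variables. Solving $a_n^k=(p\,k+q)\,a_n^{k-1}+s\,(n+1)\,a_{n+1}^{k-1}$ for $a_{n+1}^{k-1}$ and relabelling indices should yield $b_n^k=\frac{-p\,n-(p+q)}{s\,k}\,b_n^{k-1}+\frac{1}{s\,k}\,b_{n+1}^{k-1}$, which is exactly of the form treated in Theorem \ref{theo24111} with parameters $p'=-p/s,\ q'=-(p+q)/s,\ s'=1/s$. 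This reduction is the conceptual heart of the argument: it lets me avoid evaluating the mixed sum \eqref{equa190} directly, which is awkward here precisely because $u$ depends on $k$ while $v$ depends on $n$, so that $C_0\neq C_n$.

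Next I would apply Theorem \ref{theo24111} to the matrix $\bigl(b_n^k\bigr)$, obtaining $\bar b(t)=\exp(q'\,t)\,B\bigl(\frac{s'}{p'}(\exp(p'\,t)-1)\bigr)$, where $\bar b$ is the ordinary generating function of the final sequence of $b$ and $B$ the exponential generating function of its initial sequence. The key identifications are $b_0^k=a_k^0$ and $b_n^0=a_0^n$, so that $\bar b(t)=a(t)$ and $B(t)=\bar A(t)$. These turn the previous identity into the functional relation $a(t)=\exp(q'\,t)\,\bar A\bigl(F(t)\bigr)$, with $F(t)=\frac{s'}{p'}(\exp(p'\,t)-1)=-\frac1p\bigl(\exp(-\frac ps\,t)-1\bigr)$. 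Reading this as the action of an exponential Riordan matrix, I would record it in the form $(b_0^k)_{k\geq0}=R\,(k!\,b_k^0)_{k\geq0}$ with $R=\bigl[\exp(q'\,t),F(t)\bigr]$, exactly as in the proofs of Theorems \ref{theo024} and \ref{theo02411}.

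Finally I would invert. Using the inverse formula \eqref{equa14}, $R^{-1}=\bigl(\frac{1}{G(\bar F(t))},\bar F(t)\bigr)$ with $G(t)=\exp(q'\,t)$, and then invoke Theorem \ref{theo230} to read off $\bar A(t)$ in terms of $a(t)$. The compositional inverse of the exponential $F$ is a logarithm, and it is this inversion that produces the $\ln(-p\,t)$ appearing in \eqref{equa0241123}; substituting $\bar F$ into $1/G$ converts the linear exponent $q'\,t$ into the prefactor $\exp\bigl(\frac{p+q}{p}(\ln(-p\,t)+1)\bigr)$, while $a\bigl(\bar F(t)\bigr)$ supplies the second factor $a\bigl(\frac{-s}{p}(\ln(-p\,t)+1)\bigr)$. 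The main obstacle I anticipate is bookkeeping rather than ideas: one must track carefully at each step whether a generating function is ordinary or exponential---i.e.\ which weight $w_\ell\in\{1,\ell!\}$ of Theorem \ref{theo230} is in force---and pin down the correct branch and signs when inverting $F$, since this is where the closed form \eqref{equa0241123} is actually determined. Once $\bar F$ and $1/G(\bar F)$ are computed consistently, the stated identity follows immediately from Theorem \ref{theo230}.
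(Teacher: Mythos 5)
Your proposal follows essentially the same route as the paper's own proof: pass to the transpose matrix with the recurrence $b_n^k=\frac{-p\,n-(p+q)}{s\,k}\,b_n^{k-1}+\frac{1}{s\,k}\,b_{n+1}^{k-1}$, apply Theorem \ref{theo24111} to $\bigl(b_n^k\bigr)$, identify its initial and final sequences with those of $\bigl(a_n^k\bigr)$, and then invert the resulting Riordan matrix via \eqref{equa14} and conclude with Theorem \ref{theo230}. The only step you leave implicit --- the explicit computation of $\bar F(t)$ and of $1/G\bigl(\bar F(t)\bigr)$ --- is precisely what the paper writes out, so the two arguments coincide.
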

\begin{proof}
	Considering $\Bigl(b_n^k\Bigr)_{n,k\geq 0}={}^t M(u,v,a)$ the transpose matrix of $M(u,v,a)$ such that $a_n^k=\bigl(p\ k+q\bigr)\ a_n^{k-1}+ s\ \bigl(n+1\bigr)\ a_{n+1}^{k-1}$ and $b_n^k=\frac{-p\ n -(q+p)}{s\ k}\ b_n^{k-1}+ \frac{1}{s\ k}\ b_{n+1}^{k-1},$ with $p\neq 0$ and $s\neq 0.$ From \mbox{Theorem \ref{theo24111}}, we have
	\begin{displaymath}
		\begin{aligned}
			\sum_{k\geq 0}\ b_0^k\ t^k&=\exp\bigl(\frac{-(q+p)}{s}\ t\bigr)\ \sum_{k\geq 0}\ b_{k}^0\ \Bigl(\frac{-1}{p}\ \bigl(\exp\bigl(\frac{-p\ t}{p}\bigr)-1\bigr)\Bigr)^{k}.
		\end{aligned}
	\end{displaymath}
	From equation \eqref{equ08}, we get
	\begin{displaymath}
		\begin{aligned}	
			\Bigl(b_0^k\Bigr)_{k\geq 0}&=R\ \Bigl(k!\ b_{k}^0\Bigr)_{k\geq 0},\\
			R^{-1}\ \Bigl(b_0^k\Bigr)_{k\geq 0}&=\Bigl(k!\ b_{k}^0\Bigr)_{k\geq 0},
		\end{aligned}
	\end{displaymath}
	with $R=\Bigl(g(t),f(t)\Bigr)=\Bigl(\exp\bigl(\frac{-(q+p)}{s}\ t\bigr),\ \frac{-1}{p}\ \bigl(\exp\bigl(\frac{-p\ t}{p}\bigr)-1\bigr)\Bigr)$ and the inverse matrix $R^{-1}$ is defined by
	\begin{displaymath}
		\begin{aligned}
			R^{-1}&=\Bigl[\frac{1}{g\bigl(\bar{f}(t)\bigr)},\bar{f}(t)\Bigr],\\
			&=\Bigl[\exp\Bigl(\frac{-(p+q)}{p}\ \bigl(\ln (-p\ t)+1\bigr)\Bigr),\frac{-s}{p}\ \bigl(\ln (-p\ t)+1\bigr)\Bigr].
		\end{aligned}
	\end{displaymath}
	From Theorem \ref{theo230}, we obtain
	\begin{displaymath}
		\begin{aligned}
			\Bar{A}(t)&=\exp\Bigl(\frac{p+q}{p}\ \bigl(\ln (-p\ t)+1\bigr)\Bigr)\ \sum_{k\geq 0}\ \frac{k!}{k!}\ a_k^0\ \Bigl(\frac{-s}{p}\ \bigl(\ln (-p\ t)+1\bigr)\Bigr)^k,\\
			&=\exp\Bigl(\frac{p+q}{p}\ \bigl(\ln (-p\ t)+1\bigr)\Bigr)\ a\bigl(\frac{-s}{p}\ \bigl(\ln (-p\ t)+1\bigr)\bigr).
		\end{aligned}
	\end{displaymath}	
\end{proof}

\section{Applications}\label{sec3}

\subsection{Case of $U_n=-1$ and $v_n=n+1$} \label{subsec31}

Let us consider the matrix $\Bigl(\delta_n^k\Bigr),$ where $\delta_n^k$ is the general coefficient of the Euler-Seidel matrix with $u_n=-1$ and $v_n=n+1,$ (\text{see table \ref{tab2}})
\begin{equation}
\left\{ \begin{array}{ll}
\delta_n^0=1\ &(n\geq 0)\\
\delta_n^k=-\delta_n^{k-1}+(n+1)\ \delta_{n+1}^{k-1}\ &(k\geq 1,\ n\geq 0).
\end{array}\right.\label{equa42}
\end{equation}

\begin{table}[h!]
	\centering
	\begin{tabular}{@{}l|cccccl@{}}
		\toprule
		\multicolumn{7}{@{}c@{}}{$\delta_n^k$}\\
		\midrule
		$k \backslash n$ & 0 & 1 & 2 & 3 & 4 & $\cdots$ \\
		\midrule
		0 & 1 & 1 & 1 & 1 & 1 & $\cdots$\\
		1 & 0 & 1 & 2 & 3 & $\cdots$ & $\cdots$\\
		2 & 1 & 3 & 7 & $\cdots$ & $\cdots$ & $\cdots$ \\
		3 & 2 & 11& $\cdots$& $\cdots$ & $\cdots$ & $\cdots$\\
		4 & 9 & $\cdots $& $\cdots$& $\cdots$ & $\cdots$ & $\cdots$\\
		$\vdots$ & $\vdots$	& $\vdots$ & $\vdots$ & $\vdots$ & $\vdots$ & $\ddots$ \\
		\botrule
	\end{tabular}
	\caption{\centering Matrix $\Bigl(\delta_n^k\Bigr)$}\label{tab2}
\end{table}	

\begin{theorem}\label{theo31}
	The exact value of the number of n-fixed-points-permutations of length $(n+k)$ is equal to
	\begin{equation}
	\delta_n^k=\sum_{\ell=0}^{k}\ (-1)^{k-\ell}\ \binom{k}{\ell}\ \frac{(n+\ell)!}{n!}. \label{equa43}
	\end{equation}
\end{theorem}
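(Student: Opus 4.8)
The plan is to identify $\delta_n^k$ with a sum of the path-weights $C_n(k,\ell)$ furnished by Theorem~\ref{theo21} and then to evaluate each weight in closed form. First I would observe that the initial sequence is constant, $\delta_n^0=1$ for every $n$, so the general formula of Theorem~\ref{theo21} specialises to
\begin{displaymath}
\delta_n^k=\sum_{\ell=0}^{k}\ C_n(k,\ell)\ \delta_{n+\ell}^0=\sum_{\ell=0}^{k}\ C_n(k,\ell).
\end{displaymath}
Everything therefore reduces to computing $C_n(k,\ell)$ for the weights $u_n=-1$ and $v_n=n+1$.

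Next I would evaluate $C_n(k,\ell)$ directly from its defining sum~\eqref{equa190}. Since both $u$ and $v$ depend only on the column index (the single-variable situation of Theorem~\ref{theo23}), each factor of the $u$-product equals $-1$ and there are exactly $k-\ell$ of them, giving $(-1)^{k-\ell}$; the $v$-product collapses to $\prod_{j=0}^{\ell-1} v_{n+j}=(n+1)(n+2)\cdots(n+\ell)=\frac{(n+\ell)!}{n!}$, a value independent of the chosen subset $A$. As the number of subsets $A\subset[k]$ with $\lvert A\rvert=k-\ell$ is $\binom{k}{k-\ell}=\binom{k}{\ell}$, I would conclude
\begin{displaymath}
C_n(k,\ell)=(-1)^{k-\ell}\ \binom{k}{\ell}\ \frac{(n+\ell)!}{n!},
\end{displaymath}
and summing over $\ell$ yields the claimed identity~\eqref{equa43}.

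The only delicate point is to confirm that the $v$-product really is $\prod_{j=0}^{\ell-1} v_{n+j}$ and takes the same value for every admissible $A$. The elements of $[k]\setminus A$ are listed in decreasing order as $m_0>\cdots>m_{\ell-1}$, but the factor attached to $m_j$ is $v(n+j,m_j)=v_{n+j}=n+j+1$, whose value ignores $m_j$; hence the index set $\{0,1,\ldots,\ell-1\}$ for $j$ contributes precisely the factors $n+1,\ldots,n+\ell$ regardless of $A$. Once this is granted the evaluation is routine arithmetic.

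As an independent check one may bypass the weights and argue by induction on $k$ from the recurrence~\eqref{equa42}. The base case $k=0$ is immediate; substituting the inductive formula for $\delta_n^{k-1}$ and $\delta_{n+1}^{k-1}$ into $\delta_n^k=-\delta_n^{k-1}+(n+1)\ \delta_{n+1}^{k-1}$ and performing the shift $\ell\mapsto\ell-1$ in the second sum, the two contributions combine through Pascal's rule $\binom{k-1}{\ell}+\binom{k-1}{\ell-1}=\binom{k}{\ell}$ to reproduce~\eqref{equa43}, confirming the evaluation of $C_n(k,\ell)$.
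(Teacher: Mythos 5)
Your proposal is correct and follows essentially the same route as the paper: both specialise Theorem~\ref{theo21} to $u_n=-1$, $v_n=n+1$ with constant initial sequence $\delta_n^0=1$, evaluate the $u$-product as $(-1)^{k-\ell}$ and the $v$-product as $(n+1)\cdots(n+\ell)=\frac{(n+\ell)!}{n!}$, and count the $\binom{k}{\ell}$ subsets $A$. Your added verification that the $v$-product is independent of $A$, and the inductive cross-check via Pascal's rule, go beyond the paper's two-line computation but do not change the argument.
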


\begin{proof}
	From equation (\ref{equa19}), we have
	\begin{displaymath}
	\begin{aligned}
		\delta_n^k&=\sum_{\ell=0}^{k}\ \sum_{\substack{A\subset[k]\\ \lvert A\rvert=\ell}}\ \prod_{i=1}^{\ell}\ (n+i)\ (-1)^{k-\ell}\ \delta_{n+\ell}^0,\\
		&=\sum_{\ell=0}^{k}\ \binom{k}{\ell}\ \frac{(n+\ell)!}{n!}\ (-1)^{k-\ell}.
	\end{aligned}
	\end{displaymath}
\end{proof}	
Note that if $n=0,$ we obtain
\begin{equation}
	\begin{aligned}
		\delta_0^k&=\sum_{\ell=0}^{k}\ (-1)^{k-\ell}\ \frac{k!}{(k-\ell)!},\\
		&=d_k, \label{equa44}
	\end{aligned}
\end{equation}
where $d_k$ is the classical number of derangement of length $k$.\\
From Theorem \ref{theo2412}, we have	
\begin{eqnarray}
	\begin{aligned}
		\Bar{A}(t)&=\exp(-t)\ \frac{1}{1-t}\label{equa0462}\\
		\text{et}\ R&=\Bigl[\exp(-t),\ t\Bigr] .\label{equa464}
	\end{aligned}
\end{eqnarray}

\subsection{Case of $u(n,k)=\frac{-(n+1)}{k}$ and $V(n,k)=V_k=\frac{1}{k}$}\label{subsec32}

Let us consider the matrix $\Bigl(a_n^k\Bigr),$ (\text{see table \ref{tab3}})
\begin{equation}
	\left\{ \begin{array}{ll}
		a_n^0=1\ &(n\geq 0)\\
		a_n^k=-\frac{n+1}{k}\ a_n^{k-1}+\frac{1}{k}\ a_{n+1}^{k-1}\ &(k\geq 1,\ n\geq 0).
	\end{array}\right.\label{equa4211}
\end{equation}

\begin{table}[h!]
	\centering
	\begin{tabular}{@{}l|cccccl@{}}
		\toprule
		\multicolumn{7}{@{}c@{}}{$\delta_n^k$}\\
		\midrule
		$k \backslash n$ & 0 & 1 & 2 & 3 & 4 & $\cdots$ \\
		\midrule
		0 & 1 & 2 & 7 & 35 & 228 & $\cdots$\\
		1 & 1 & 3 & 14 & 88 & $\cdots$ & $\cdots$\\
		2 & 1 & 4 & 23 & $\cdots$ & $\cdots$ & $\cdots$ \\
		3 & 1 & 5& $\cdots$& $\cdots$ & $\cdots$ & $\cdots$\\
		4 & 1 & $\cdots $& $\cdots$& $\cdots$ & $\cdots$ & $\cdots$\\
		$\vdots$ & $\vdots$	& $\vdots$ & $\vdots$ & $\vdots$ & $\vdots$ & $\ddots$ \\
		\botrule
	\end{tabular}
	\caption{\centering Matrix $\Bigl(\delta_n^k\Bigr)$}\label{tab3}
\end{table}	

From Theorem \ref{theo24111}, we have	
\begin{eqnarray}
	\begin{aligned}
		\Bar{a}(t)&=\frac{1}{1+\ln \Bigl(\frac{1}{1+\exp(t)}\Bigr)}\label{equa046}\\
		\text{et}\ R&=\Bigl[\exp(-t),\ 1-\exp(-t)\Bigr] .\label{equa46}
	\end{aligned}
\end{eqnarray}
	
\section{Conclusion} \label{sec4}
In this Euler-Seidel matrix, we used a weighted network for the calculation of each element of the matrix by assigning a weight $u(n,k)$ which is the coefficient of $\Bigl(a_n^{k-1}\Bigr)$ on each North direction and a weight $v(n,k)$ which is the coefficient of $\Bigl(a_{n+1}^{k-1}\Bigr)$ on each North-East direction. The generating function of the final sequence is determined from the use of the associated Riordan matrix.

\section*{Statements and Declarations}
\noindent{\bf Funding}: No funding was received for conducting this study or for the preparation of this manuscript.\\

\noindent{\bf Conflict of Interest}: The  authors declare that they have no competing interests.\\

\noindent{\bf Ethical Approval}: Not applicable.\\

\noindent{\bf Data availability}: Data sharing not applicable as no datasets were generated.

\end{document}